\theoremstyle{plain}
\newtheorem*{thm*}{Theorem}
\newtheorem{thm}{Theorem}
\Crefname{thm}{Theorem}{Theorems}
\newtheorem*{lem*}{Lemma}
\newtheorem{lem}[thm]{Lemma}
\Crefname{lem}{Lemma}{Lemmas}
\newtheorem*{claim*}{Claim}
\crefname{claim}{Claim}{Claims}
\Crefname{claim}{Claim}{Claims}
\Crefname{prop}{Proposition}{Propositions}
\newtheorem*{prop*}{Proposition}
\crefname{cor}{Corollary}{Corollaries}
\newtheorem{conj}[thm]{Conjecture}
\crefname{conj}{Conjecture}{Conjectures}
\newtheorem{qn}[thm]{Question}
\Crefname{qn}{Question}{Questions}
\Crefname{obs}{Observation}{Observations}
\Crefname{ex}{Example}{Examples}
\theoremstyle{definition}
\Crefname{prob}{Problem}{Problems}
\Crefname{defn}{Definition}{Definitions}
\newtheorem*{defn*}{Definition}
\theoremstyle{remark}
\renewenvironment{proof}[1][]{\begin{trivlist}
\item[\hspace{\labelsep}{\bf\noindent Proof#1.\/}] }{\qed\end{trivlist}}
\newcommand{\remove}[1]{}
\newcommand{\floor}[1]{
    \left \lfloor #1 \right \rfloor
}
\newcommand{\ceil}[1]{
    \left \lceil #1 \right \rceil
}
\newcommand{\eps}{\varepsilon}
\renewcommand{\P}{\mathbb{P}}
\newcommand{\R}{R_{\ell}}
\renewcommand{\ex}{\mathop{\mathrm{ex}}}
\newcommand{\mc}{monochromatic}
\begin{document}

%\allowdisplaybreaks

\title{\vspace{-0.9cm} List Ramsey numbers}

\author{
Noga Alon\thanks{Department of Mathematics, Princeton
University, Princeton,
New Jersey, USA and Schools of Mathematics and
Computer Science,
Tel Aviv University, Tel Aviv, Israel. 
e-mail: \texttt{nogaa}@\texttt{tau.ac.il}.
Research supported in part by
NSF grant DMS-1855464, ISF grant 281/17,
GIF grant G-1347-304.6/2016,
BSF grant 2018267
and the Simons Foundation.
}
	\and
	Matija Buci\'c\thanks{
	    Department of Mathematics, 
	    ETH, 
	    8092 Zurich;
	    e-mail: \texttt{matija.bucic}@\texttt{math.ethz.ch}.
	    Research supported in part by SNSF grant 200021-175573.
	}
	\and
	Tom Kalvari \thanks{
School of Mathematics,
Tel Aviv University, Tel Aviv, Israel. 
e-mail: \texttt{tomkalva}@\texttt{mail.tau.ac.il}.
}
	\and
    Eden Kuperwasser \thanks{
School of Mathematics,
Tel Aviv University, Tel Aviv, Israel. 
e-mail: \texttt{kuperwasser}@\texttt{mail.tau.ac.il}.
    }
    \and
	Tibor Szab\'o\thanks{Institute of Mathematics, FU Berlin,
          14195 Berlin; e-mail: \texttt{szabo}@\texttt{math.fu-berlin.de}.
Research supported in part by GIF grant No. G-1347-304.6/2016 and by
the Deutsche Forschungsgemeinschaft (DFG, German Research Foundation)
under Germany's Excellence Strategy -- The Berlin Mathematics Research Center MATH+ (EXC-2046/1, project ID: 390685689).
}
}

\date{}

\maketitle

\begin{abstract}
    \setlength{\parskip}{\smallskipamount}
    \setlength{\parindent}{0pt}
    \noindent
We introduce a list colouring extension of classical Ramsey numbers. 
We investigate when the two Ramsey numbers are equal, and in general,
how far apart they can be from each other. We find graph sequences
where the two are equal and where they are far apart. 
For $\ell$-uniform cliques we prove that the list Ramsey number is 
bounded by an exponential function, while it is well-known that the
Ramsey number is super-exponential for uniformity at least $3$.
This is in great contrast to the graph case where we cannot even decide 
the question of equality for cliques.

\end{abstract}

\section{Introduction}
The notion of proper colouring and the corresponding parameter of the
chromatic number is one of the most applicable and
widely-studied topics in (hyper)graph theory. In some of these applications
the list-colouring extension of the notion is necessary to describe 
the situation appropriately. 
A {\em colouring} of a hypergraph $H= (V,E)$ is a function
$c:V\rightarrow \mathbb{N}$. 
A colouring is called  {\em proper} if no hyperedge $e\in E$ is
monochromatic. For an assignment $L:V\rightarrow  2^{\mathbb{N}}$ of 
a subset $L_v\subseteq \mathbb{N}$ of colours to each vertex
$v\in V$, we call a colouring $c:V\rightarrow \mathbb{N}$ an 
{\em $L$-colouring} if $c(v)\in
L_v$ for every $v\in V$. When $L_v = [k]$ for every $v\in V$, an
$L$-colouring is called a {\em $k$-colouring}.

The chromatic number $\chi(H)$ is the smallest integer $k$ such that there
exists a proper $k$-colouring of $H$ and the list-chromatic number (or
choice number)  $\chi_{\ell} (H)$ is the smallest integer $k$ such that for
every assignment $L$ of lists of size $k$ to the vertices of $H$ 
there is a proper $L$-colouring.
By definition $\chi (H) \leq \chi_{\ell} (H)$ for every graph
$H$. Under what circumstances are the two
parameters equal and how far they can be from each other?
 These fundamental
questions are the subject of vigorous research, see, e.g., 
\cite{BM}, Chapter 14 and the references therein. A notorious
open question in this direction is the List Colouring Conjecture 
suggested independently by various researchers including Vizing,
Albertson, Collins,
Tucker and Gupta, which appeared first in print in the
paper of Bollob\'as and Harris \cite{BH}
and states that the list-chromatic number is equal to the
chromatic number for line-graphs. This conjecture was proved by Galvin
\cite{Galvin} for bipartite graphs, by H\"aggkvist and Janssen
\cite{haggkvist} for cliques of odd
order, by Alon and Tarsi \cite{Al} 
for cubic bridgeless planar graphs, by Ellingham and Goddyn 
\cite{EG} for regular class-$1$ planar multigraphs
and by Kahn \cite{Ka} 
asymptotically, but is very much open in general. Even
for cliques $K_n$ of even order it is not known whether the
list-chromatic number of its line graph is
$n$ or $n-1$.
%$\chi(L(H))=\chi_{\ell} (L(H))$  

A particularly interesting instance of hypergraph colouring arises from Ramsey theory, which is concerned with the proper colouring of very specific hypergraphs. 
Ramsey's Theorem states that for any $r$-uniform hypergraph
(or $r$-graph) $G$ and number $k$ of colours any $k$-colouring of the
$r$-subsets of $[n]$ contains a monochromatic copy of the hypergraph
$G$, provided $n$ is large enough depending on $G$ and
$k$. The smallest such integer $n$ is usually called the $k$-colour
Ramsey number of the hypergraph $G$. 

	\begin{defn*}
The $k$-colour (ordinary) \textit{Ramsey number} of an $r$-graph $G$ is defined as 
	$$R(G,k) : =\min\{n \mid \forall \text{$k$-colouring of } E(K_n^{(r)}),
        \text{ $\exists$ a \mc{} copy of } G\}.$$
	\end{defn*}

The study of Ramsey numbers has attracted a lot of attention over the 
years and many natural generalisations and extensions of 
Ramsey numbers were considered, for excellent surveys see
\cite{erdos1}, \cite{ramsey-survey} and the 
references therein. In this paper we study a new variant, a list colouring version of the Ramsey problem. 
In particular, when is it possible to assign lists of size $k$ 
to the edges 
of $K_n^{(r)}$ in such a way that if we colour each edge with a colour
from its list we can always find a monochromatic copy of a given
graph. If we require all lists to be the same we recover the ordinary
Ramsey number. 
This gives rise to the following list-colouring variant of the Ramsey number. 
		
	\begin{defn*}
The $k$-colour \textit{list Ramsey number} of an $r$-uniform hypergraph $G$ is defined by 
	\begin{align*}
	\R(G,k)=\min \{n \mid & \exists L: E(K_n^{(r)}) \rightarrow
                                \binom{\mathbb{N}}{k}
                                \text{ s.t. $\forall$ 
                                $L$-colouring of $E(K_n^{(r)})$
                                $\exists$ a \mc{} copy of } G\}.
	\end{align*}
	\end{defn*}

A first observation, immediate from the definition, is that for
every $G$ and $k$, we have  
	\begin{align}\label{ineq:trivial}
	    \R(G,k) \le R(G,k).
	\end{align}

In our paper we will be investigating when this inequality is an equality
and, more generally, when the two quantities are close to each other
and when they are far apart, how far apart can they be. This question for specific families of graphs turns out to be related to several long standing open problems such as the aforementioned list colouring conjecture, we give the details in the following subsections.

{\bf Remark.} Notion of the list Ramsey number was suggested 
at \href{https://mathoverflow.net/questions/298778/list-ramsey-numbers}{https://mathoverflow.net/questions/}
\href{https://mathoverflow.net/questions/298778/list-ramsey-numbers}{298778/list-ramsey-numbers}, 
where some basic observations were made, as well as a conjecture, 
which we disprove,  that 
inequality \eqref{ineq:trivial} is actually always an equality.

\subsection{Results}
\subsubsection{Stars}
Any edge-colouring of a graph contains no monochromatic copy of $K_{1,2}$ if and
only if it is proper. Therefore the $k$-colour Ramsey number (list
Ramsey number) of $K_{1,2}$
is equal to the smallest 
number $n$ such that $\chi'(K_{n}) > k$ 
($\chi'_{\ell}(K_n) > k$, respectively), where 
here $\chi'(G)$ denotes the edge chromatic number of $G$ which can be defined as the chromatic number of its line graph and similarly for $\chi'_{\ell}$.
Hence the question whether the two Ramsey
numbers of $K_{1,2}$ are equal for an arbitrary number $k$ of colours 
is essentially equivalent to the aforementioned List Colouring Conjecture for cliques.  
It was proved by H\"aggkvist and Janssen that $\chi_{\ell}'(K_n) \leq
n$ for every $n$, which implies that the list chromatic index $\chi'_{\ell}(K_n)$ is
equal to the chromatic index $\chi'(K_n)$ for odd $n$. The question whether  
$\chi_{\ell}' ( K_n)$ is equal to $\chi'(K_n)$ for even $n$ is still
open.
Consequently we know that $\R(K_{1,2}, k) = k +1 = R(K_{1,2}, k)$
when
$k$ is even, but we do not know whether $\R(K_{1,2}, k)$ is $k+1$ or $k+2$
when $k$ is odd.

The multicolour Ramsey number for stars of arbitrary size 
was determined by Burr and Roberts~\cite{stars}. They
showed that
\begin{equation} %\label{eq:burr-roberts}
(r-1)k +1\leq R(K_{1,r},k) \leq (r-1)k+2,
\end{equation}
 and that the lower bound is tight if and only if both $r$ and $k$ are even.

In our first theorem we extend the validity of the lower
bound to the list Ramsey number, thus establishing that the lower
bound is tight when both $r$ and $k$ are even. Furthermore, we show for any fixed number $k$ of colours, that for large enough $r$ the upper bound is tight.
\begin{thm} \label{thm:stars-lower-bound-k-col}
For any $k$ and $r\in \mathbb{N}$, except
    possibly finitely many integers $r$ for each odd $k$, we have
    $\R(K_{1,r}, k)= R(K_{1,r}, k)$. More precisely,
\begin{itemize} 
\item[(a)] For every $r,k\in \mathbb{N}$, we have \begin{equation}
    \label{eq:star-lower} (r-1)k + 1\leq  \R(K_{1,r}, k).
\end{equation} 
In particular, $\R(K_{1,r}, k) = (r-1)k +1= R(K_{1,r}, k)$
  whenever both $r$ and $k$ are even.
\item[(b)]For every $k\in \mathbb{N}$ there exists $w(k)\in \mathbb{N}$
  such that the following holds. For every $k$ and $r \geq w(k)$
  that are not both even, we have $$\R(K_{1,r}, k) = (r-1)k +2 = R(K_{1,r},k).$$
\end{itemize}
\end{thm}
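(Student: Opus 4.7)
For part (a), my plan is to reduce to the Häggkvist--Janssen theorem $\chi'_\ell(K_n)\le n$. Fix $n=(r-1)k$ and any list assignment $L:E(K_n)\to\binom{\mathbb{N}}{k}$. Define the expanded list $L'_e:=L_e\times[r-1]$ on each edge, which has size $k(r-1)=n$. Häggkvist--Janssen then supplies a proper $L'$-edge-colouring $c':E(K_n)\to\mathbb{N}\times[r-1]$, and setting $c(e)$ to be the first coordinate of $c'(e)$ produces an honest $L$-colouring. Any two edges at a common vertex that $c$ sends to the same colour $\gamma$ must differ in their $c'$ second coordinate, so at most $r-1$ edges at any vertex receive colour $\gamma$, ruling out a monochromatic $K_{1,r}$. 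This establishes \eqref{eq:star-lower}. Equality with $R(K_{1,r},k)=(r-1)k+1$ in the both-even case is then immediate from \eqref{ineq:trivial} and the Burr--Roberts upper bound.

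For part (b), the same list-expansion only reaches $|L'_e|=k(r-1)=n-1$ at $n=(r-1)k+1$, and running Häggkvist--Janssen at one colour short would essentially demand the (still open) List Colouring Conjecture for $K_n$. My plan is instead to exploit both the large-$r$ regime and the parity hypothesis. The hypothesis that $r$ and $k$ are not both even is exactly the parity condition making $n(r-1)$ even, which admits a Burr--Roberts-style decomposition of $K_n$ into $k$ spanning $(r-1)$-regular subgraphs $H_1,\ldots,H_k$ -- this is the classical source of $R(K_{1,r},k)\ge(r-1)k+2$. To incorporate the list constraint, I would encode the search for an $L$-colouring with per-colour degree $\le r-1$ as a bipartite $b$-matching problem, with the edges of $K_n$ on one side and the colour-slots $(v,\gamma)$ of capacity $r-1$ on the other (an edge $e=uv$ being joined to $(u,\gamma)$ and $(v,\gamma)$ for every $\gamma\in L_e$), and verify its feasibility via a deficiency-type Hall/flow argument. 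The regular decomposition above provides an explicit fractional witness of feasibility, and the slack gained from $r\gg k$ should allow this fractional witness to be rounded integrally.

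The main obstacle, and what forces the quantitative threshold $w(k)$, is verifying Hall's condition against the worst dual configurations: families of edges whose lists collectively cover few colours, concentrating demand onto a small number of slots. For bounded $k$ and large $r$, any such family is necessarily restricted in size relative to the capacity available in its neighbourhood, and I expect a careful counting argument -- perhaps combined with a local exchange step to handle near-tight cases and leveraging the $(r-1)$-regular decomposition of $H_1,\ldots,H_k$ -- to be what pushes the inequality through. Once this is established, the matching upper bound $R(K_{1,r},k)\le(r-1)k+2$ from Burr--Roberts closes part (b) via \eqref{ineq:trivial}.
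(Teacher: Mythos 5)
Your part (a) is correct and takes a genuinely different route from the paper. The paper partitions $E(K_{(r-1)k})$ into $r-1$ cliques $K_k$ and the complete bipartite graphs between the parts, applies H\"aggkvist--Janssen and Galvin to each piece, and invokes its Lemma~\ref{thm:star-lower-bound}. Your list-expansion trick --- blowing each list up to $L_e\times[r-1]$ of size $k(r-1)=n$, applying $\chi'_\ell(K_n)\le n$ directly, and projecting to the first coordinate --- is a clean one-step argument: a proper colouring by pairs forces each first-coordinate colour to appear at most $r-1$ times at every vertex. This is arguably slicker than the paper's decomposition, and it makes transparent why the bound is exactly $(r-1)k$.

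Part (b), however, has a genuine gap. Your reduction to a bipartite $b$-matching is not faithful: assigning colour $\gamma$ to $e=uv$ must consume one unit of capacity at \emph{both} slots $(u,\gamma)$ and $(v,\gamma)$ simultaneously, so the feasibility problem is a set-packing/CSP, not a bipartite matching, and no Hall- or max-flow-type deficiency condition characterises it. Moreover, even granting an LP formulation, the $(r-1)$-regular Burr--Roberts decomposition only witnesses \emph{fractional} feasibility, and the passage from fractional to integral feasibility is exactly the hard content of list edge-colouring (if fractional witnesses rounded in general, the List Colouring Conjecture for $K_n$ --- which the paper notes is open and equivalent to the $r=2$ case --- would follow). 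The missing idea is the one the paper uses: decompose $K_{(r-1)k+1}$ into $k$-regular \emph{bipartite} pieces (copies of $K_{k,k}$, plus a few copies of $K_{k+1,k+1}$ minus a perfect matching to fix divisibility), so that Galvin's theorem gives a proper $L$-colouring of each piece and every vertex lies in at most $(n-1)/k=r-1$ pieces; the existence of this decomposition for $r\ge w(k)$ comes from Gustavsson's packing theorem, which is precisely where the threshold $w(k)$ originates. Without a concrete mechanism of this kind for producing an integral colouring, your plan for (b) does not close.
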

Our theorem fails to give a full characterisation of the tightness of
the lower bound in \eqref{eq:star-lower}. For two colours we can give such a
characterisation and find that the two Ramsey numbers
are always equal.

\begin{restatable}{thm}{thmstars}\label{thm:stars-2-col}
For every $r\in \mathbb{N}$ we have
$$\R(K_{1,r},2)=R(K_{1,r},2)=
\begin{cases}
2r-1 \text{ if } r \text{ is even}\\
2r\textcolor{white}{-11 } \text{ if } r \text{ is odd.}
\end{cases}
$$
\end{restatable}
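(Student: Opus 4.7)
The plan is to combine \cref{thm:stars-lower-bound-k-col}(a) with the Burr--Roberts formula for $R(K_{1,r},2)$ and the trivial inequality $R_\ell(G,k)\le R(G,k)$. Burr--Roberts gives $R(K_{1,r},2) = 2r-1$ when $r$ is even and $R(K_{1,r},2) = 2r$ when $r$ is odd, while \cref{thm:stars-lower-bound-k-col}(a) applied with $k=2$ gives the lower bound $R_\ell(K_{1,r},2)\ge 2r-1$ in both cases. For $r$ even the two bounds coincide, yielding $R_\ell(K_{1,r},2) = 2r-1$ immediately; the substantive case is $r$ odd, where the bounds leave a gap of one and we must improve the lower bound to match $2r$.

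Concretely, for $r$ odd it remains to show that for every list assignment $L\colon E(K_{2r-1})\to\binom{\mathbb{N}}{2}$ there exists an $L$-coloring in which every vertex has at most $r-1$ edges of each color. Every vertex of $K_{2r-1}$ has degree $2(r-1)$, so this is a tight, exactly-balanced condition; the hypothesis that $r$ is odd (equivalently, that $r-1$ is even) is essential, since it is exactly when $(r-1)$-regular graphs on $2r-1$ vertices exist, which is the very reason the ordinary Ramsey number $R(K_{1,r},2)$ jumps from $2r-1$ to $2r$ as $r$ switches parity.

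To build such a balanced $L$-coloring I would apply a local-modification argument. Among all $L$-colorings $c$, choose one minimizing
\[
\Phi(c) \;:=\; \sum_{v}\sum_{c^*}\max\!\bigl(0,\,n_{v,c^*}(c)-(r-1)\bigr),
\]
where $n_{v,c^*}(c)$ counts the edges at $v$ that $c$ assigns color $c^*$. Assume for contradiction $\Phi(c)>0$, so some pair $(v,c^*)$ satisfies $n_{v,c^*}(c)\ge r$; since the total degree at $v$ is $2(r-1)$, every other color $c'$ at $v$ has $n_{v,c'}(c)\le r-2$. For any edge $e=vu$ colored $c^*$, flipping $e$ to its alternative color $c'_e\in L(e)$ strictly decreases the $v$-contribution to $\Phi$ without creating any new violation at $v$; the only obstruction is that the flip might create a new violation at the opposite endpoint $u$ in color $c'_e$.

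To overcome this, I would trace an alternating chain $v=v_0,v_1,v_2,\dots$ of vertices joined by edges $e_i=v_{i-1}v_i$ along which each candidate flip would merely push the violation forward by one vertex. At every visited vertex the same degree-parity calculation shows only one color is overfull while all others have slack at least one, so the chain's continuation is essentially forced. The main obstacle of the proof is to show, using the oddness of $r$ together with the completeness of $K_{2r-1}$, that such a chain cannot continue forever: it must either close into a Kempe-type cycle (whose simultaneous flip strictly decreases $\Phi$) or terminate at a vertex whose slack absorbs a final flip. Either outcome contradicts the minimality of $c$ and delivers the desired balanced $L$-coloring, completing the proof.
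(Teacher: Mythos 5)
Your reduction of the even case is fine: \Cref{thm:stars-lower-bound-k-col}(a) with $k=2$ gives $\R(K_{1,r},2)\ge 2r-1$, which meets the Burr--Roberts value $2r-1$ from above via \eqref{ineq:trivial}, and the proof of part (a) in the paper does not rely on \Cref{thm:stars-2-col}, so there is no circularity. You have also correctly isolated what remains for $r$ odd: every $2$-list assignment on $K_{2r-1}$ must admit an $L$-colouring in which each vertex sees each colour at most $r-1$ times.

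The odd case, however, has a genuine gap, and it sits exactly where you place ``the main obstacle.'' Your potential-function setup is sound up to the point where a single flip at an overfull pair $(v,c^*)$ is blocked only by creating a new violation at the other endpoint; by minimality of $\Phi$ you may then conclude that \emph{every} $c^*$-edge $vu$ at $v$ has $n_{u,c'_e}\ge r-1$ and $n_{u,c^*}\le r-1$, and that flipping preserves $\Phi$ and transfers the excess to $u$. But you never prove that this process terminates in a strict decrease, and the two exit routes you name are not established: a closed chain of flips that each preserve $\Phi$ has no a priori reason to decrease $\Phi$ when performed simultaneously (the colours along the chain vary with the lists, so this is not a two-colour Kempe cycle whose flip you can analyse endpoint by endpoint), and you give no argument that a vertex with slack is ever reached. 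Crucially, the parity of $r$ --- which is the entire content of the statement, since for $r$ even no such balanced colouring exists even with identical lists --- never actually enters your argument; you assert it is ``essential'' but do not use it, which is a strong sign the local argument as written cannot close. The paper instead injects the parity globally: by Alspach--Gavlas, $K_{2r-1}$ decomposes into cycles of length $r-1$ (possible since $r-1$ divides $(2r-1)(r-1)$, for $r\ge 5$); because $r$ is odd these cycles are even, hence bipartite, hence $2$-edge-choosable by \Cref{thm:galvin}, and each vertex lies on exactly $r-1$ of them, so \Cref{thm:star-lower-bound} applies. (The case $r=3$ needs a separate ad hoc argument with two $5$-cycles, since odd cycles are not $2$-choosable.) If you want to salvage your route, you would need to turn the chain into a genuine augmenting-path argument for degree-constrained sub(multi)graphs and show where $2\mid r-1$ forces an augmentation to exist; as it stands the proof is incomplete.
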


\subsubsection{Matchings}

We saw above that for stars the two Ramsey numbers are equal, possibly
up to an additive constant one. Next we consider matchings and find
that, unlike for stars,  
the ordinary Ramsey number is significantly larger than the list Ramsey number
for most values of the parameters.

Ramsey numbers of matchings were determined in 1975 by Cockayne and
Lorimer \cite{cockayne_lorimer_1975}. They showed that for every $r,
k\in \mathbb{N}$, 
\begin{equation}\label{eq:cockayne-lorimer}
R(rK_2,k)=rk+r-k +1.
\end{equation}

A trivial lower bound on the list Ramsey number $\R(rK_2, k)$ is $2r$:
if we were to find a matching of size $r$ in $K_n$, monochromatic or
not, then $n$ better be at least the number of vertices in
$rK_2$. It turns out that if the number $k$ of colours is not too large
compared to $r$, then this trivial lower bound 
is asymptotically tight! That is, even
if $n$ is just slightly larger than $2r$, there exists an assignment of lists
of size $k$ to the edges of $K_n$, such 
that any list-colouring of the edges contains
a monochromatic $rK_2$ (i.e., an almost perfect 
matching which is monochromatic). 
Note that by \eqref{eq:cockayne-lorimer}, using the same $k$ colours on
each edge one {\em can} colour 
a much larger clique without a monochromatic $rK_2$. In particular
we show that for any fixed number $k$ of colours 
the two Ramsey numbers are a constant factor $\frac{k+1}{2}$ 
away from each other asymptotically, as $r$ tends to infinity.

The number $k$ of colours becomes more visible in the value of the list
Ramsey number once $k$ is larger than a logarithmic function of the
size $r$ of the matching. In particular for any fixed $r$, we determine the growth
rate of the $k$-colour list Ramsey number up to an absolute
constant factor and find that the ratio of the two Ramsey numbers grows
as $\Theta(\log k)$.

\begin{thm} \label{thm:matchings}
For any fixed $k\ge 2$ and $r$ tending to infinity, we have
$\R(rK_2,k)=2r+o(r)$. In particular 
$$\frac{R(rK_2, k)}{\R(rK_2, k)} = \frac{k+1}{2} + o(1).$$
For any fixed $r\geq 1$ and $k$ tending to infinity, we have 
$\R(rK_2,k)=\Theta(k/\log k).$
In particular  $$\frac{R(rK_2, k)}{\R(rK_2, k)} = \Theta( \log k ).$$
\end{thm}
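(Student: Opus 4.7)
I treat the two asymptotic regimes separately.

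\textit{Part I ($k$ fixed, $r\to\infty$).} The lower bound $\R(rK_2,k)\ge 2r$ is trivial since $|V(rK_2)|=2r$. For the upper bound, set $n=2r+t$ with $t=t(r,k)=o(r)$ and a pool size $m=m(r,k)$ to be tuned, and sample each $L_e$ independently and uniformly from $\binom{[m]}{k}$. A \emph{bad} $L$-colouring is a map $c:E(K_n)\to[m]$ with every fibre $c^{-1}(j)$ of matching number less than $r$; by Erd\H{o}s--Gallai each fibre then has at most $B:=\max\{\binom{2r-1}{2},(r-1)(n-r+1)+\binom{r-1}{2}\}$ edges, and is structurally a subgraph of either $K_{2r-1}$ on some $(2r-1)$-set, or the star-union at some $(r-1)$-set. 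The probability that a fixed $c$ is an $L$-colouring is $(k/m)^{\binom{n}{2}}$ by independence of the lists. One bounds the number of bad $c$ by enumerating these structural covers for each colour class together with the partition constraint, and then tunes $m$ and $t=o(r)$ so that the expected number of bad $L$-colourings is $o(1)$, yielding the desired list assignment.

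\textit{Part II ($r$ fixed, $k\to\infty$), lower bound.} To show $\R(rK_2,k)\ge c(r-1)k/\log k$, fix $n\le c(r-1)k/\log k$ and any list assignment $L$; I construct an $L$-colouring with no monochromatic $rK_2$. For every colour $c$ appearing in some $L_e$, sample independently a uniform random $(r-1)$-subset $S_c\subseteq[n]$, and attempt to colour each edge $e=uv$ by any $c\in L_e$ with $S_c\cap\{u,v\}\ne\emptyset$. If every edge succeeds, each colour class lies inside the star-union at $S_c$, which has matching number $\le r-1$; hence no monochromatic $rK_2$ arises. The failure probability for a fixed edge is
\[
\left(\frac{(n-r+1)(n-r)}{n(n-1)}\right)^{k}\le e^{-2(r-1)k/n+O(k/n^{2})},
\]
which is $o(n^{-2})$ in the chosen range of $n$, so a union bound over the $\binom{n}{2}$ edges finishes.

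\textit{Part II, upper bound.} For $\R(rK_2,k)=O(k/\log k)$, I would again use a random list assignment from a pool $[m]$ of an appropriate (polynomial in $k$) size on $n=Ck/\log k$ vertices. The analogous union bound over bad colourings, counted via the Erd\H{o}s--Gallai structural dichotomy for matching-free graphs, should deliver the matching upper bound.

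\textit{Main obstacle.} The technical heart of both parts is a sufficiently sharp count of bad colourings in the probabilistic constructions, exploiting the rigid extremal structure (a clique $K_{2r-1}$ or a star-union at $r-1$ vertices) of matching-free graphs on few vertices; a naïve count loses polynomial or exponential factors that prevent the target rates $2r+o(r)$ in Part I and $\Theta(k/\log k)$ in Part II. A secondary subtlety in Part I is that the simple density argument (``some colour receives $\binom{n}{2}/m$ edges'') does not suffice, because a large colour class can be a star; one must use the full structure of matching-free graphs rather than just the Erd\H{o}s--Gallai edge bound.
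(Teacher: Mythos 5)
Your Part II lower bound is correct and is essentially the paper's argument in a more concrete guise: the paper assigns to each colour a random ``type'' indexed by the colour classes of an extremal Cockayne--Lorimer colouring, while you assign to each colour a random $(r-1)$-set serving as a vertex cover of its class; both reduce to the same per-edge failure probability $e^{-\Theta(rk/n)}$ and a union bound over $\binom{n}{2}$ edges, and either version yields $\R(rK_2,k)=\Omega\bigl(k/\log k\bigr)$ for fixed $r$ (the paper's version also gives the $\Omega((r-1)k/\log(rk))$ bound uniformly in $r$). The Part I lower bound $2r$ is trivial, as you say.

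The genuine gap is in both upper bounds, and it is the structural claim you attribute to Erd\H{o}s--Gallai: it is \emph{false} that every graph with matching number less than $r$ is a subgraph of a clique $K_{2r-1}$ or of the union of $r-1$ stars. For $r=3$, two disjoint triangles have matching number $2$, span six non-isolated vertices (so fit in no $K_5$), and have vertex cover number $4$ (so fit in no star-union at a $2$-set). Erd\H{o}s--Gallai characterises the \emph{extremal} graphs, not all graphs below the threshold; the correct description of all such graphs is the Gallai--Edmonds structure (a cover set $U$ together with an arbitrary partition of the rest into odd components), and the number of resulting containers is super-exponential in $n$, which destroys the union bound you need. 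The paper's key idea, which your sketch is missing, is to restrict attention to a balanced complete bipartite subgraph $K_{n/2,n/2}\subseteq K_n$: there K\"onig's theorem does give the clean statement that a colour class with no matching of size $r$ lies inside one of only $\binom{n}{r-1}$ star-union containers. A second, lesser issue is that you union-bound over bad colourings $c$ and multiply by $\P(c\text{ is an }L\text{-colouring})=(k/m)^{e(K_n)}$; the paper instead union-bounds over the $m^{k+t}$ assignments of one container to each colour and then computes $\P(\forall e\ \exists i\in L_e:\ e\in C_{j_i})$, which factors over edges as $\prod_e\bigl(1-\binom{k+t-d_e}{k}/\binom{k+t}{k}\bigr)$. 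Your version effectively replaces the factor $1-\binom{k+t-d_e}{k}/\binom{k+t}{k}\le 1$ by $d_ek/(k+t)$, which can exceed $1$ and is too lossy to reach $2r+o(r)$; the subsequent parameter tuning (the paper takes $t\approx kn/(20r^{k/(k+1)})$ and $n=2r+O(r^{k/(k+1)})$) is where the real work lies and is left entirely open in your plan.
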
	
In fact we determine the list Ramsey number of matchings for all values of $r$ and $k$ up to a constant factor and when $r$ is sufficiently bigger than $k$ even up to an additive lower order term. For more details 
see Subsection \ref{subs:matchings}. 
\subsubsection{Cliques and Hypergraphs} 
Some of the most famous open problems in Ramsey theory involve cliques.
The proofs of the classic probabilistic lower bounds on $R(K_r, 2)$
all go through in the list chromatic setting, hence 
$$2^{r/2} < \R(K_r, 2) \leq R(K_r, 2) < 2^{2r}.$$ 
Not unexpectedly, we cannot improve on the lower bound. It is
not difficult to see that $\R(K_3, 2) = 6= R(K_3, 2)$,  but for 
$r > 3$ we cannot even decide the equality of the two Ramsey numbers of
$K_r$ when $k=2$. 

For hypergraphs of uniformity $\ell \geq 3$ however, we are able to 
show an exponential (or even larger, depending on the uniformity) 
separation  between the 
ordinary and the list Ramsey numbers.
On the one hand it is known via the stepping-up lemma of
Erd\H{o}s and Hajnal (see Chapter 4.7 of \cite{erdos1}) that the
Ramsey numbers of cliques are super-polynomial in the exponent whenever $\ell\ge
4$ or $\ell=3,k\ge 3$ (Conlon, Fox, Sudakov \cite{conlon-ramsey} for
$k = \ell= 3$) and in fact grow at least as fast as a tower of
height
$\ell-2$. For the list Ramsey number on the other hand we can show that for 
fixed uniformity and number of colours it is upper bounded by an
exponential in a polynomial in $r$. 
%T: I would state this theorem only for growing cliques (the statement
%for other $H$ is immediate) and immediately here, after the
%discussion where we promise the separation. 
\begin{restatable}{thm}{thmubb}
\label{thm:cliques-r-grows} For arbitrary positive integers $r \geq
\ell$ and $k\in \mathbb{N}$ we have
$$\R(K_r^{(\ell)},k) \le 2^{4r^{3\ell-1}+4kr^{\ell-1}\log_2 r}.$$
\end{restatable}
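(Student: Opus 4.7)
The plan is to use a random list construction. Set $N$ equal to the right-hand side of the claimed upper bound on $\R(K_r^{(\ell)},k)$, and fix an appropriate palette size $M$ (to be optimised in terms of $r, \ell, k$). Independently for each $\ell$-edge $e \in E(K_N^{(\ell)})$, sample $L(e)$ uniformly from $\binom{[M]}{k}$. The goal is to show that with positive probability every $L$-colouring of $E(K_N^{(\ell)})$ contains a monochromatic $K_r^{(\ell)}$; any particular realisation of $L$ then witnesses $\R(K_r^{(\ell)},k) \le N$.

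For a fixed $M$-colouring $c$ of $E(K_N^{(\ell)})$, the probability that $c(e) \in L(e)$ for every edge $e$ is exactly $(k/M)^{\binom{N}{\ell}}$ by independence of the lists. Letting $\mathcal{B}$ denote the collection of $M$-colourings of $E(K_N^{(\ell)})$ that contain no monochromatic $K_r^{(\ell)}$, a union bound gives
$$\mathbb{P}\bigl[\exists \text{ $L$-colouring with no mono } K_r^{(\ell)}\bigr] \;\le\; |\mathcal{B}| \cdot (k/M)^{\binom{N}{\ell}}.$$
It therefore suffices to bound $|\mathcal{B}|$ sharply enough that this quantity is below $1$.

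The crux is a Ramsey-type supersaturation estimate: a uniformly random $M$-colouring of $E(K_N^{(\ell)})$ misses all monochromatic copies of $K_r^{(\ell)}$ with only exponentially small probability. Concretely, by applying (a suitable variant of) Janson's inequality to the events ``$S$ is monochromatic'' indexed by $S \in \binom{[N]}{r}$, one obtains
$$|\mathcal{B}|/M^{\binom{N}{\ell}} \;\le\; \exp(-c\,\mu),$$
where $\mu = \binom{N}{r}M^{1-\binom{r}{\ell}}$ is the expected number of monochromatic $K_r^{(\ell)}$'s and $c>0$ is absolute, provided $M$ is large enough that Janson's pair-correlation term is dominated by $\mu$. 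Inserting this into the union bound, the problem collapses to the single quantitative inequality $\mu \gtrsim \binom{N}{\ell}\log k$. Balancing this against the lower bound on $M$ coming from Janson's condition, and against the trivial requirement $M \ge k$ (so that the lists make sense), yields after routine computation a bound of the shape $N \le 2^{O(r^{3\ell-1}) + O(kr^{\ell-1}\log r)}$; the explicit constants in the statement come from a direct but conservative bookkeeping of these optimisations.

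The main obstacle is obtaining a strong enough upper bound on $|\mathcal{B}|$: the trivial bound $|\mathcal{B}| \le M^{\binom{N}{\ell}}$ and also the naive colour-class-by-colour-class counting that bounds each class via the Tur\'an number $\ex(N, K_r^{(\ell)})$ are both far too weak to beat the factor $(M/k)^{\binom{N}{\ell}}$; the gain must come from genuine supersaturation that exploits the many overlapping $r$-subsets in $K_N^{(\ell)}$. Performing this step carefully, and tracking constants through the palette-size optimisation, is what produces the precise exponents $3\ell-1$ and $\ell-1$ (and the factor of $k$ rather than $\log k$) appearing in the statement, and is where essentially all of the technical work lies.
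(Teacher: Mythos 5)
Your overall architecture (random lists of size $k$ from a palette $[M]$, followed by a union bound against the bad event) matches the paper's, but the specific union bound you propose --- over \emph{individual} bad colourings $c\in\mathcal{B}$ --- provably cannot work, and the Janson-type estimate you invoke to rescue it is false in the relevant regime. Here is the obstruction. Wherever the theorem is non-trivial we have $N<R(K_r^{(\ell)},k)$, so there is a $k$-colouring $c_0$ of $E(K_N^{(\ell)})$ with no monochromatic $K_r^{(\ell)}$. Partition $[M]$ into parts $T_1,\dots,T_k$ of size at least $\lfloor M/k\rfloor$ and count the colourings $c$ with $c(e)\in T_{c_0(e)}$ for every edge: each refines $c_0$ and hence lies in $\mathcal{B}$, so $|\mathcal{B}|\ge\lfloor M/k\rfloor^{\binom{N}{\ell}}$. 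Your union bound $|\mathcal{B}|\cdot(k/M)^{\binom{N}{\ell}}$ is therefore at least $\bigl(k\lfloor M/k\rfloor/M\bigr)^{\binom{N}{\ell}}$, which equals $1$ when $k\mid M$ and is never usefully small; in particular the inequality $|\mathcal{B}|\le M^{\binom{N}{\ell}}e^{-c\mu}$ with $\mu\ge C\binom{N}{\ell}\log k$ that your argument requires is simply not true. One can also see the failure from inside Janson's inequality: the hypothesis that the pair-correlation term is dominated by $\mu$ forces $N^{r-\ell}\lesssim M^{\binom{r}{\ell}-1}$ (consider pairs of $r$-sets sharing exactly $\ell$ vertices), while $\mu\gtrsim\binom{N}{\ell}\log k$ forces $N^{r-\ell}\gtrsim M^{\binom{r}{\ell}-1}(r!/\ell!)\log k$; these are incompatible, and the extended Janson inequality in the regime $\Delta\gg\mu$ only yields $e^{-\mu^2/(2\Delta)}$, whose exponent is of order $N^{\ell}/(\binom{r}{\ell}^2\ell!)$ and cannot absorb the factor $k^{\binom{N}{\ell}}$.

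The paper repairs exactly this step with hypergraph containers. Instead of unioning over bad colourings, it takes containers $C_1,\dots,C_m$ for $K_r^{(\ell)}$-free $\ell$-graphs satisfying $|E(C_i)|\le\bigl(1-\tfrac{2}{3}\binom{r-1}{\ell-1}^{-1}\bigr)\binom{n}{\ell}$ and $\log m\le 2^{13\binom{r}{\ell}^2}n^{\ell-1/m(H)}\log n$ (built from Saxton--Thomason together with De Caen's Tur\'an bound and Erd\H{o}s--Simonovits supersaturation --- this is where supersaturation actually enters, not in counting colourings). Every bad colouring has each colour class inside some container, so one unions over the $m^{k+t}$ container tuples --- a number that is sub-exponential in $\binom{n}{\ell}$ --- and, for a fixed tuple, computes the probability that every edge has a colour on its list whose assigned container covers it; this factorises over edges by independence of the lists and is exponentially small in $\binom{n}{\ell}$ because each container misses a $\Theta\bigl(\binom{r-1}{\ell-1}^{-1}\bigr)$ fraction of the edges. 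You would need to import this container step, or some equivalent compression of the family of bad colourings, for your argument to go through; without it there is a genuine gap at the heart of the proof.
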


This theorem obviously provides an upper bound on the list Ramsey number of
any fixed $\ell$-graph $H$, which is an exponential function of $k$.
For a growing number of colours the base of the exponent can be
strengthened. 
In order to state our result, we need to introduce a few standard
parameters. Let $\ex(H,n)$ denote the maximum number of edges in 
an $H$-free $\ell$-graph on $n$ vertices and let 
$\pi(H)=\lim_{n\to \infty}\ex(H,n)/\binom{n}{\ell}.$ 
Assuming $H$ has at least $2$ edges let 
$$m(H)=\underset{H' \subset H, e(H') > 1}{\max}\frac{e(H')-1}{v(H')-\ell}.$$

\begin{restatable}{thm}{thmub} 
\label{thm:hypergraph-upper}%
Let $H$ be an $\ell$-uniform hypergraph. Then, as $k$ tends to infinity, we have
$$\R(H,k) \le (1-\pi(H)+o(1))^{-km(H)}.$$
\end{restatable}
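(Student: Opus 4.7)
The plan is to construct a random list assignment and invoke the hypergraph container theorem. Fix $\delta > 0$ and set $n = \lceil(1-\pi(H)+\delta)^{-km(H)}\rceil$. The hypergraph container theorem, whose governing parameter is exactly the $m(H)$ defined in the paper, yields a family $\mathcal{C}$ of subsets of $\binom{[n]}{\ell}$ such that (i) every $H$-free $\ell$-graph on $[n]$ is contained in some $C \in \mathcal{C}$, (ii) $|C| \le (\pi(H)+o(1))\binom{n}{\ell}$ for each $C$, and (iii) $\log|\mathcal{C}| = O_H(n^{\ell-1/m(H)}\log n)$. Choose an integer $N$ with $k^3 \le N \le n^{1/m(H)}(1-\pi(H)-\delta)^k/\log^2 n$ (possible by the choice of $n$, for $k$ large) and let each $L_e$ be an independent uniformly random $k$-subset of $[N]$.

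Any $H$-free $L$-coloring $c$ determines, for each colour $i$, a container $C_i \in \mathcal{C}$ with $c^{-1}(i) \subseteq C_i$; conversely, given any tuple $(C_1,\ldots,C_N) \in \mathcal{C}^N$, such a coloring exists iff $L_e \cap S_e \ne \emptyset$ for every edge $e$, where $S_e := \{i : e \in C_i\}$. Using independence of the lists across edges, a union bound over container-tuples yields
\[\P[L \text{ bad}] \le |\mathcal{C}|^N\cdot\max_{(C_i)\in\mathcal{C}^N}\prod_e\P[L_e\cap S_e\ne\emptyset].\]
The key computation bounds the inner product using the exact formula $\P[L_e\cap S_e=\emptyset] = \binom{N-|S_e|}{k}/\binom{N}{k}$, a convex function of $s = |S_e|$ (its logarithm has positive second derivative). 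By Jensen's inequality, together with $\binom{n}{\ell}^{-1}\sum_e|S_e| = \binom{n}{\ell}^{-1}\sum_i|C_i| \le (\pi(H)+o(1))N$,
\[\sum_e\P[L_e\cap S_e=\emptyset] \ge (1-o(1))\binom{n}{\ell}(1-\pi(H)-\delta)^k,\]
using $N\gg k^2/(1-\pi(H))$ to reduce $\binom{N-N(\pi+o(1))}{k}/\binom{N}{k}$ to $(1-\pi-\delta)^k(1+o(1))$. Then $\prod_e(1-y_e)\le\exp(-\sum_e y_e)$ gives
\[\P[L \text{ bad}] \le \exp\!\Bigl(N\log|\mathcal{C}|-(1-o(1))\binom{n}{\ell}(1-\pi(H)-\delta)^k\Bigr)<1\]
by our choice of $N$, so some $L$ is good and $\R(H,k)\le n$.

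The main obstacle is extracting the correct $(1-\pi(H))^k$ decay in the inner product: the naïve union bound $\P[L_e\cap S_e\ne\emptyset]\le k|S_e|/N$ only yields $\prod \le (k\pi(H))^{\binom{n}{\ell}}$, which is useless as soon as $k\ge 1/\pi(H)$. One must keep the probability in its true form and apply Jensen to the convex function $f(s)=\binom{N-s}{k}/\binom{N}{k}$ to extract the geometric decay $(1-\pi(H))^k$ that balances the container blow-up $|\mathcal{C}|^N$ and ultimately yields the bound $(1-\pi(H)+o(1))^{-km(H)}$.
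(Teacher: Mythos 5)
Your proposal follows essentially the same route as the paper: random $k$-lists drawn from a universe of $N=k+t$ colours, the Saxton--Thomason container theorem, a union bound over container tuples, and a convexity/Jensen step to extract the $(1-\pi(H))^k$ decay (the paper first lower-bounds $\binom{k+t-d_e}{k}/\binom{k+t}{k}$ by $(1-d_e/(t+1))^k$ and then applies Jensen, which is the same idea in a slightly different order). However, one sign must be fixed for the argument to close: with your choice $n=\lceil(1-\pi(H)+\delta)^{-km(H)}\rceil$ you get $n^{1/m(H)}(1-\pi(H)-\delta)^k=\bigl(\tfrac{1-\pi(H)-\delta}{1-\pi(H)+\delta}\bigr)^k\to 0$, so the window $k^3\le N\le n^{1/m(H)}(1-\pi(H)-\delta)^k/\log^2 n$ is empty and the step you flag as ``possible by the choice of $n$'' fails. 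You need $n=\lceil(1-\pi(H)-2\delta)^{-km(H)}\rceil$ (any base strictly below $1-\pi(H)-\delta$), which still yields the stated bound since the $o(1)$ in the theorem may be negative; this is exactly the paper's choice $n=(1-\pi(H)-3\eps)^{-km(H)}$. A second, minor point: $\log\bigl(\binom{N-s}{k}/\binom{N}{k}\bigr)$ is concave in $s$, not convex; what you actually need (and what is true, e.g.\ because it is a product of positive, decreasing, convex linear factors) is that $s\mapsto\binom{N-s}{k}/\binom{N}{k}$ is itself convex, so that Jensen gives the lower bound on $\sum_e\P[L_e\cap S_e=\emptyset]$ in the direction you use. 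With these two repairs the proof is correct and matches the paper's.
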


For the particular case of $k$-colour list Ramsey number of
the triangle the theorem gives the exponential upper bound 
$\R (K_3, k) \leq (4+o(1))^k$. 

The behaviour of the 
ordinary $k$-colour Ramsey number $R(K_3,k)$ is related to 
other open problems, most notably the question if the maximum
possible Shannon capacity of a graph with independence number $2$
is finite, see \cite{EMT}, \cite{AO}. It is one of the notorious open
problems of combinatorics to decide whether its growth rate is
exponential or superexponential. Erd\H os offers \$100 for its
resolution and \$250 for the determination of the limit
$\lim\limits_{k \to \infty} \sqrt[k]{R(K_3,k)}$ provided it exists. 
The current best lower bound is $R(K_3,k) \geq 3.199^k$ 
(see \cite{XZER}), so not
large enough for us to conclude that the ordinary and the list Ramsey
numbers are different.

For the list Ramsey number we can only give a much weaker lower bound,
where the exponent is the square root of the number of colours.

\begin{restatable}{thm}{thmlb} \label{thm:hypergraph-lower}
If $H$ is an $\ell$-uniform hypergraph with $\chi(H) >r$, then we have 
$$\R(H,k) \ge e^{\sqrt{k\log r/(4\ell)}}.$$
In particular $\R(K_3,k) > e^{\sqrt{k}/4}$.%e^{\frac{\sqrt{\log 2}}{2\sqrt{2}}\sqrt{k}}$.
\end{restatable}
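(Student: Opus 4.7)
The plan is to show that for every list assignment $L\colon E(K_n^{(\ell)})\to \binom{\mathbb{N}}{k}$ with $n\le e^{\sqrt{k\log r/(4\ell)}}$, there exists an $L$-colouring of $K_n^{(\ell)}$ with no monochromatic copy of $H$, by a recursive probabilistic construction. The inductive step samples two independent sources of randomness: a uniformly random vertex $r$-colouring $\phi\colon V\to[r]$, and a colour labelling $\lambda\colon\mathbb{N}\to\{0,1\}$ in which each colour is independently assigned label $0$ with probability $p\asymp \ell\log n/k$ and label $1$ otherwise.

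Given these, colour the edges in two groups. For every edge $e$ on which $\phi$ is not constant, pick $c(e)$ uniformly from the ``reserved'' set $L_e\cap\lambda^{-1}(0)$; for every edge $e$ on which $\phi$ is constant with value $j$, defer its colour to a recursive call inside the subclique $K_{V_j}^{(\ell)}$, where $V_j=\phi^{-1}(j)$, using the restricted lists $L_e\cap \lambda^{-1}(1)$. The hypothesis $\chi(H)>r$ enters crucially here: every copy $H_0$ of $H$ must contain at least one $\phi$-monochromatic edge, so if $V(H_0)$ is not contained in a single colour class then $H_0$ has both $\phi$-monochromatic and non-$\phi$-monochromatic edges, whose colours are drawn from the disjoint pools $\lambda^{-1}(1)$ and $\lambda^{-1}(0)$; these cannot agree, ruling out monochromaticity. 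The only remaining threats are copies entirely contained in some $V_j$, which are handled by the recursive call.

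Standard Chernoff concentration and a union bound, with $p\asymp \ell\log n/k$, guarantee with positive probability that all reserved and restricted lists are simultaneously non-empty, so the construction is well-defined. Each level of the recursion reduces the clique size by a factor of roughly $r$ (by concentration of the class sizes $|V_j|$) and reduces the list size by an additive $\Theta(\ell\log n)$. After $\log_r n$ levels the recursion terminates with subcliques of fewer than $v(H)$ vertices, and the telescoping sum of the per-level list-size losses is
\[
  \sum_{i=0}^{\log_r n}\Theta\bigl(\ell\log(n/r^i)\bigr)=\Theta\bigl(\ell(\log n)^2/\log r\bigr).
\]
For the recursion to close this must not exceed $k$, which after rearrangement is exactly the condition $\log n\le \sqrt{k\log r/(4\ell)}$, the claimed bound.

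The hardest step is executing the Chernoff/union-bound bookkeeping uniformly across all levels of the recursion, since each inductive sub-problem operates on random restricted lists whose distribution inherits correlations from previous label choices. Moreover, the bias $p$ must be tuned so that the additive list-size loss per level is precisely $\Theta(\ell\log n)$ rather than larger; a different tuning either breaks well-definedness or produces only a single-exponential lower bound. The square-root exponent in the final bound arises precisely from the logarithmic per-level loss combined with the $\log_r n$ recursion depth.
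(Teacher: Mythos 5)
Your argument is correct in substance and reaches the claimed bound, but it is best understood as a recursive repackaging of the paper's proof rather than a genuinely different route. The paper factors the argument into two pieces: a deterministic recursive $r$-partition colouring showing $R(H,m)>r^m$ whenever $\chi(H)>r$ (this is exactly your iterated $\phi$'s, with ``colour $i$ equals the first level at which the edge is split''), and a general transfer lemma (Theorem~\ref{thm:lower-bound}) stating that $R(H,\lfloor k/(\ell\log n)\rfloor)>n$ implies $\R(H,k)>n$. The transfer lemma is proved in one shot: give every colour a single uniformly random type in $[m]$ with $m=\lfloor k/(\ell\log n)\rfloor$; the probability that an edge's list contains no colour of its prescribed type is $(1-1/m)^k\le n^{-\ell}$, and a single union bound over the $\binom{n}{\ell}$ edges finishes. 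Your level-$i$ reserved pool is precisely the set of colours of type $i$, so the mechanism is identical; the difference is that by revealing the type information level by level you force yourself into the multi-level bookkeeping you describe. That bookkeeping does close: the number of colours an edge loses at level $i$ is distributed as $\mathrm{Bin}(k_i,p_i)$ with mean $\Theta(\ell\log n_i)$, whose upper tail at twice its mean is $n_i^{-\Theta(\ell)}$, and since each level uses a fresh independent labelling the correlations you flag as ``the hardest step'' are not actually present. So your proof works, at the cost of extra effort for the same square-root exponent; the paper's flat version has the added advantage that the transfer lemma is reused verbatim for Theorems~\ref{thm:non-l-partite} and~\ref{thm:l-partite}.

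One genuine, though easily repaired, gap: your claim that a copy $H_0$ not contained in a single part must contain a non-$\phi$-monochromatic edge requires $H$ to be connected. For disconnected $H$ a copy may straddle several parts with every edge $\phi$-monochromatic, and because your recursive calls in different parts draw independent labellings, their sub-level pools need not be disjoint, so such copies are not controlled. Fix this either by replacing $H$ with a connected component $H'$ satisfying $\chi(H')>r$ (one exists, and any colouring with no monochromatic $H'$ has no monochromatic $H$, so $\R(H,k)\ge\R(H',k)$), or by sampling one labelling per level globally across all parts, which is what the paper's single type assignment does implicitly.
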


Note that this theorem gives a lower bound exponential in the square
root of $k$ for every non-$2$-colourable $\ell$-graph $H$. 
Our argument extends to every non-$\ell$-partite $\ell$-graph,
even if they are $2$-colourable, with a somewhat worse constant factor
in the exponent.  

\begin{thm} \label{thm:non-l-partite}
Let $H$ be an $\ell$-uniform hypergraph which is not $\ell$-partite. We have 
$$\R(H,k) \ge e^{c_\ell\sqrt{k}},$$
where $1/c_\ell=2\ell e^{\ell/2}.$
\end{thm}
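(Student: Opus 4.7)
My plan is to adapt the probabilistic argument behind \Cref{thm:hypergraph-lower}, replacing its hypothesis $\chi(H)>r$ by the weaker non-$\ell$-partiteness. In that earlier proof, the hypothesis $\chi(H)>r$ serves exactly to guarantee that under a uniformly random vertex colouring $c:V(K_n)\to [r]$, every copy $H'\subseteq K_n^{(\ell)}$ of $H$ necessarily contains at least one edge on which $c$ is constant (a ``$c$-monochromatic'' edge). For $H$ only assumed non-$\ell$-partite, I would instead sample a uniform $c:V(K_n)\to [\ell]$ and use the analogous pigeonhole fact: every copy $H'$ of $H$ must contain at least one \emph{non-rainbow} edge under $c$, that is, an edge whose $\ell$ vertices fail to cover all of $[\ell]$. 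Otherwise $c|_{V(H')}$ would be an $\ell$-partition witnessing $\ell$-partiteness of $H$, a contradiction.

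With this structural substitute in place, I would carry out the same probabilistic construction as in \Cref{thm:hypergraph-lower}: alongside $c$, I would independently attach a uniformly random ``type'' in $[\ell]^\ell$ to each colour appearing in some $L_e$, and then define $\phi(e)\in L_e$ to be any colour whose type is compatible with the pattern induced by $c$ on the $\ell$ vertices of $e$. A routine computation shows that each edge retains an admissible colour with good probability, and that the non-rainbow-edge guarantee supplies, for every copy $H'$ of $H$, an additional saving of $\ell!/\ell^\ell$ on top of the usual $k^{1-e(H)}$ factor coming from uniform random colour choice. By Stirling, $\ell!/\ell^\ell\le e^{-\ell/2}$ up to a polynomial factor in $\ell$, which is the precise place where $e^{\ell/2}$ enters $1/c_\ell$. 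Plugging this into the estimate from \Cref{thm:hypergraph-lower} with $r=\ell$, and absorbing the Stirling correction together with the edge-orbit overcount into the constant $2\ell$, yields the bound $\R(H,k)\ge e^{c_\ell\sqrt k}$ with $c_\ell=1/(2\ell e^{\ell/2})$.

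The main obstacle---already the technical heart of \Cref{thm:hypergraph-lower}---is designing the auxiliary type-labelling so that the restricted lists remain simultaneously nonempty on every edge (so that $\phi$ is well defined) and still produce the desired exponential-in-$\sqrt k$ saving on each potentially monochromatic copy of $H$. The only genuinely new content here is verifying that a non-rainbow edge, rather than a fully $c$-monochromatic edge, already suffices to give an exponential saving per copy; this weaker structural guarantee is precisely what degrades the constant from the ``clean'' $r=\chi(H)-1$ regime of \Cref{thm:hypergraph-lower} to the claimed $1/c_\ell=2\ell e^{\ell/2}$.
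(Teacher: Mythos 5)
Your overall skeleton---characterise non-$\ell$-partiteness by ``every $\ell$-partition of the vertices leaves a non-rainbow edge in every copy of $H$'', build an auxiliary ordinary colouring out of vertex partitions, and transfer it to lists by randomly assigning types to colours---is indeed the route the paper takes. The paper, however, packages the transfer step as the general reduction \Cref{thm:lower-bound} and simply cites Axenovich, Gy\'arf\'as, Liu and Mubayi \cite{axenovich} for the ordinary bound $R(H,k)\ge e^{k/((\ell+1)e^\ell)}$; the $\sqrt{k}$ and the constant $1/c_\ell=2\ell e^{\ell/2}$ then come from balancing the two constraints $m\le k/(\ell\log n)$ and $\log n\le m/((\ell+1)e^\ell)$. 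In particular $e^{\ell/2}$ enters as $\sqrt{e^{\ell}}$, the square root of the reciprocal rainbow probability $\ell^\ell/\ell!\approx e^{\ell}$, and not because $\ell!/\ell^\ell\approx e^{-\ell/2}$ as you assert (up to polynomial factors it is $e^{-\ell}$).

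The construction you actually describe has a genuine gap. With a \emph{single} random vertex colouring $c:V\to[\ell]$ and one type per list colour, nothing prevents a monochromatic copy of $H$ inside a single colour class of $c$: all edges inside $c^{-1}(j)$ share the constant pattern $(j,\dots,j)$, so if the adversary gives every edge the same list, the roughly $k/\ell^{\ell}$ colours whose type matches that pattern are simultaneously admissible on every such edge, and ``pick any compatible colour'' can produce a monochromatic complete $\ell$-graph on $n/\ell$ vertices; even choosing uniformly among compatible colours, a first-moment count over copies inside one class gives only $n$ polynomial in $k$, far from $e^{c_\ell\sqrt{k}}$. The structural fact has to be used in the opposite direction: each colour class of the auxiliary colouring should consist of edges that are \emph{rainbow} with respect to its own, independently chosen partition, so that a monochromatic copy would be entirely rainbow and would witness $\ell$-partiteness of $H$; this requires one fresh partition per auxiliary colour (with a union bound ensuring every edge is rainbow somewhere), which is exactly the content of the bound from \cite{axenovich}. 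Finally, you cannot ``plug in $r=\ell$'' into \Cref{thm:hypergraph-lower}, since that theorem assumes $\chi(H)>\ell$, which is strictly stronger than non-$\ell$-partiteness---a non-$\ell$-partite $\ell$-graph may well be $2$-colourable, and this is precisely the case the present theorem is needed for.
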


Our proof method works most efficiently when the ordinary
Ramsey number of $H$ is small.  It is known that 
the multicolour Ramsey number of an $\ell$-graph $H$ is
polynomial in $k$ if and only if $H$ is $\ell$-partite. For them we
determine the list Ramsey number up to a poly-logarithmic factor.

\begin{thm}\label{thm:l-partite}
Let $H$ be an $\ell$-partite $\ell$-uniform hypergraph with parts of size at most $r.$ There is a constant $c=c(r,\ell)$ such that 
$$R(H,\floor{ck/\log k})\le \R(H,k)\le R(H,k).$$
In particular, if $\ex(H,n)=\tilde{\Theta}(n^{\ell - \eps(H)}),$\footnote{Here $f=\tilde{\Theta}(g)$ means, as usual, that
$f$ and $g$ are equal up to polylogarithmic factors.}
for some $\eps(H) >0,$ then
$$\R(H,k) =\tilde{\Theta}(R(H,k))=\tilde{\Theta}(k^{1/\eps(H)}).$$
%In particular, if $\ex(H,n) \leq n^{\ell -\eps(H)}$, then $$\frac{R(H,k)}{\R (H,k)} = O\left( \log^{1/\eps(H)} k\right)$$. 
\end{thm}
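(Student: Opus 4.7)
The upper bound $\R(H,k)\le R(H,k)$ is immediate from \eqref{ineq:trivial}. For the lower bound $R(H,\floor{ck/\log k})\le \R(H,k)$ I would use a random relabelling argument whose efficiency rests on the fact that, for $\ell$-partite $\ell$-uniform $H$ with parts of size at most $r$, the ordinary multicolour Ramsey number is polynomial in the number of colours. Indeed, by Erd\H{o}s's theorem one has $\ex(H,n)=O(n^{\ell-\eps})$ for some $\eps=\eps(H)>0$, so the densest colour class of any $m$-colouring of $K_n^{(\ell)}$ must contain a copy of $H$ once $\binom{n}{\ell}/m$ exceeds $\ex(H,n)$, giving $R(H,m)\le C m^s$ for constants $C,s$ depending only on $r$ and $\ell$.

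Set $m=\floor{ck/\log k}$ for a small constant $c=c(r,\ell)>0$ to be chosen below, fix $n=R(H,m)-1$, and let $c_0\colon E(K_n^{(\ell)})\to [m]$ be an ordinary $m$-colouring avoiding a monochromatic $H$. To show $n<\R(H,k)$ it is enough to prove that for every list assignment $L$ with $|L_e|=k$ some $L$-colouring avoids a monochromatic $H$. Let $U=\bigcup_e L_e$ and pick $\pi\colon U\to [m]$ uniformly at random, independently across colours. For each edge $e$ the probability that $c_0(e)\notin \pi(L_e)$ equals $(1-1/m)^k\le e^{-k/m}$, so by the union bound the probability that some edge fails is at most
$$\binom{n}{\ell}e^{-k/m}\le (Cm^s)^\ell e^{-k/m},$$
which is less than $1$ whenever $k/m>\ell(s\log m+\log C)$. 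Choosing $c<1/(2\ell s)$ makes this hold for all sufficiently large $k$.

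Hence a suitable $\pi$ exists, and picking for each edge $e$ some $c(e)\in L_e$ with $\pi(c(e))=c_0(e)$ defines an $L$-colouring. Any copy of $H$ monochromatic of colour $c^*$ under this $L$-colouring would consist of edges satisfying $c_0(e)=\pi(c^*)$, yielding a monochromatic copy in $c_0$ and contradicting the choice of $c_0$. This establishes $R(H,\floor{ck/\log k})\le \R(H,k)$. For the concluding clause, the hypothesis $\ex(H,n)=\tilde\Theta(n^{\ell-\eps(H)})$ combined with the same density averaging (upper bound) and a standard random construction (matching lower bound) yields $R(H,k)=\tilde\Theta(k^{1/\eps(H)})$, and substituting $m=\floor{ck/\log k}$ only changes the value by polylogarithmic factors, so $R(H,\floor{ck/\log k})=\tilde\Theta(R(H,k))$, closing the sandwich.

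The only sensitive point is securing the polynomial bound $R(H,m)\le Cm^s$, and this is exactly where the $\ell$-partite hypothesis is essential: without it $R(H,m)$ could grow super-polynomially in $m$ (witness \Cref{thm:hypergraph-lower}) and the union bound would no longer afford the $O(k/\log k)$ relabelling budget. Everything else is a routine first-moment computation.
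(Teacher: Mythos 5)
Your proposal is correct and follows essentially the same route as the paper: the trivial inequality for the upper bound, Erd\H{o}s's Tur\'an bound to get $R(H,m)$ polynomial in $m$ for $\ell$-partite $H$, and a random assignment of the list-colours into $m$ types combined with a union bound to transfer an ordinary $m$-colouring with no monochromatic $H$ into a list colouring. The only cosmetic difference is that the paper packages the random relabelling step as a standalone lemma (\Cref{thm:lower-bound}) and then verifies its hypothesis, whereas you carry out the same first-moment computation inline.
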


This theorem can be considered an extension of the second part of 
Theorem~\ref{thm:matchings}, where we determine that the ordinary and
the list Ramsey numbers of matchings are exactly a
$\log k$ factor away from each other. 
For several bipartite graphs (for example for complete 
bipartite graphs $K_{r, s}$ for $s > (r-1)!,$ even cycles $C_6$ and $C_{10}$ or general trees) the asymptotic behaviour of the ordinary Ramsey number 
is known up to a polylogarithmic factor and hence by \Cref{thm:l-partite} so is
the list Ramsey number.

The rest of this paper is organised as follows. In Subsection 2.1 
we prove our results for stars. In Subsection 2.2 we prove the 
results for matchings, demonstrating on a relatively simple example 
the methods we are going to use in Subsection 2.3 to prove the 
bounds for list Ramsey numbers of general graphs. In Section 3 we give concluding remarks and present some open problems. All our logarithms are natural unless explicitly indicated otherwise.

\section{Bounds for list Ramsey numbers}

\subsection{Stars}
Let us start with a few preliminaries and tools which we will use throughout this subsection.
\begin{thm}[Galvin \cite{Galvin}]\label{thm:galvin}
If $G$ is a bipartite graph of maximal degree $\Delta$ 
then $\chi'_{\ell}(G)=\Delta.$
\end{thm}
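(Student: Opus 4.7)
The plan is to follow the classical kernel-method proof of Galvin, based on the Bondy--Boppana--Siegel kernel lemma. Recall that an orientation $D$ of a graph $H$ is \emph{kernel-perfect} if every induced subgraph of $D$ has a kernel, i.e.\ an independent set $K$ such that every vertex outside $K$ has an out-neighbour in $K$. The kernel lemma asserts that if $H$ admits a kernel-perfect orientation $D$, then for every list assignment $L$ with $|L(v)|\ge d^+_D(v)+1$ for all $v$, the graph $H$ has a proper $L$-colouring. I will apply this to $H=L(G)$, the line graph of the bipartite graph $G$, with an orientation whose maximum out-degree is $\Delta-1$.

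First I would invoke K\"onig's edge-colouring theorem to obtain a proper edge-colouring $c:E(G)\to[\Delta]$. Write the bipartition as $V(G)=A\cup B$. For two distinct edges $e,f\in E(G)$ sharing an endpoint $v$, I define the orientation of the edge $ef$ in $L(G)$ as follows: if $v\in A$, orient from the edge of smaller colour to the edge of larger colour; if $v\in B$, orient in the opposite direction. Since at each vertex $v$ the incident edges receive distinct colours under $c$, a given edge $e=ab\in E(G)$, viewed as a vertex of $L(G)$, has out-neighbours only among edges sharing $a$ with colour larger than $c(e)$ and among edges sharing $b$ with colour smaller than $c(e)$; the total count is at most $(\Delta-c(e))+(c(e)-1)=\Delta-1$. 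So every vertex of $L(G)$ has out-degree at most $\Delta-1$.

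The key step is to show that this orientation $D$ of $L(G)$ is kernel-perfect. Any induced subgraph $D'$ of $D$ is determined by a subset $F\subseteq E(G)$; I need to find an independent set in $L(G)[F]$ (that is, a matching $M\subseteq F$ in $G$) which dominates every edge in $F\setminus M$ via an out-arc of $D'$. Here the choice of orientation is tailor-made for the stable-matching interpretation: at each vertex $a\in A$, the colouring $c$ induces a linear preference order on the edges of $F$ incident with $a$ (smaller colour is preferred), and at each $b\in B$ the reverse order (larger colour is preferred). By the Gale--Shapley theorem the resulting bipartite preference system has a stable matching $M$, and stability is exactly the statement that every unmatched edge $e\in F\setminus M$ is dominated by some $f\in M$ through an arc $e\to f$ of $D'$, i.e.\ $M$ is a kernel of $D'$.

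Finally, given any list assignment $L$ to $E(G)$ with $|L(e)|=\Delta$ for all $e$, the kernel lemma applied to the kernel-perfect orientation $D$ of $L(G)$ with maximum out-degree $\Delta-1$ yields a proper $L$-edge-colouring of $G$, which gives $\chi'_\ell(G)\le\Delta$; the matching lower bound $\chi'_\ell(G)\ge\chi'(G)\ge\Delta$ is trivial. The main obstacle is the kernel-perfectness step, where one must recognize that the orientation I constructed precisely encodes a bipartite preference system and then cite (or reprove in two lines) the existence of stable matchings; the rest of the argument is packaging.
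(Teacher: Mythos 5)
This statement is quoted in the paper as a known theorem of Galvin and is used as a black box; the paper gives no proof of it, so there is nothing internal to compare your argument against. What you propose is the standard kernel-method proof (K\"onig edge-colouring, an orientation of the line graph with out-degrees at most $\Delta-1$, kernel-perfectness via stable matchings, and the Bondy--Boppana--Siegel lemma), and the overall structure is correct and complete.

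There is, however, one internal inconsistency you should repair: your orientation and your preference orders point in opposite directions. At a vertex $a\in A$ you orient from the smaller colour to the larger one, so the out-neighbours of $e$ at $a$ are the edges $f$ with $c(f)>c(e)$; but you then declare that $a$ prefers smaller colours. Stability of $M$ therefore guarantees, for an unmatched $e=ab$, an edge of $M$ at $a$ with colour less than $c(e)$, or an edge of $M$ at $b$ with colour greater than $c(e)$ --- in both cases an \emph{in}-neighbour of $e$ under your orientation, so $M$ is a kernel of the reverse digraph rather than of $D'$ as you defined kernels. The cure is trivial: reverse either the orientation (the out-degree count $(c(e)-1)+(\Delta-c(e))=\Delta-1$ is unchanged) or both preference orders, so that ``$v$ prefers $f$ to $e$'' coincides with ``$e\to f$ at $v$''; then stability is literally the kernel property and the rest of your argument goes through verbatim.
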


To show $\R(G,k) > n$ we need to show that for any assignment of lists of size $k$ to the edges of $K_n$ we can choose the colours from the lists in such a way that we create no monochromatic copy of $G.$ We distinguish two cases depending on parity. The following simple observation will enable us to give lower bounds on $\R(K_{1,r},k).$

\begin{lem}\label{thm:star-lower-bound}
Let us assume that graphs $G_1,\ldots, G_t$ partition the edge set of 
$K_n.$ If $\chi'_{\ell}(G_i)\le k$ for all $i$ and each vertex 
belongs to at most $r-1$ of the $G_i$'s then $\R(K_{1,r},k)>n.$ 
\end{lem}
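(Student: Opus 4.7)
The goal is to show that under the stated hypotheses, for \emph{every} assignment $L$ of $k$-element lists to $E(K_n)$, one can find an $L$-colouring of $K_n$ with no monochromatic $K_{1,r}$. The key reformulation is that avoiding a monochromatic $K_{1,r}$ is equivalent to saying that in the final edge colouring, no vertex has $r$ incident edges of the same colour; so it suffices to produce an $L$-colouring in which every vertex has at most $r-1$ edges of any given colour.

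The plan is to colour each piece $G_i$ of the partition separately and then simply glue. Given any list assignment $L: E(K_n)\to \binom{\mathbb{N}}{k}$, restrict $L$ to $E(G_i)$; this is a list assignment of size $k$ on $G_i$. By hypothesis $\chi'_{\ell}(G_i)\le k$, so there is a proper edge $L$-colouring of $G_i$. Do this independently for each $i$; since $G_1,\dots,G_t$ partition $E(K_n)$, concatenating these proper edge colourings yields a well-defined $L$-colouring of the whole $K_n$.

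For the verification, fix any vertex $v$ and any colour $c$. The vertex $v$ lies in at most $r-1$ of the subgraphs $G_i$, and every edge of $K_n$ incident to $v$ belongs to exactly one $G_i$. Within each $G_i$ containing $v$, the chosen colouring is a \emph{proper} edge colouring, so at most one edge at $v$ inside $G_i$ is coloured $c$. Summing over the at most $r-1$ subgraphs containing $v$, the total number of edges at $v$ receiving colour $c$ is at most $r-1$, so $v$ is not the centre of a monochromatic $K_{1,r}$. Since $v$ and $c$ were arbitrary, this $L$-colouring is free of monochromatic stars $K_{1,r}$, proving $\R(K_{1,r},k)>n$.

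There is no real obstacle to overcome here; the proof is essentially a bookkeeping argument once one notices that the two hypotheses are tailored to exactly the two things one must control, namely the existence of a proper colouring on each piece (handled by $\chi'_{\ell}(G_i)\le k$) and the monochromatic degree at a vertex (handled by the $r-1$ bound on the number of pieces containing it). The only mild subtlety is remembering that we are allowed to pick a worst-case $L$ first and then produce the colouring, so we must invoke list-chromatic index (not merely chromatic index) of each $G_i$.
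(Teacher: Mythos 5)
Your proof is correct and follows essentially the same route as the paper: restrict the list assignment to each $G_i$, use $\chi'_{\ell}(G_i)\le k$ to properly colour each piece, glue the colourings along the partition, and observe that each vertex then sees at most one edge of any given colour in each of the at most $r-1$ pieces containing it. No gaps.
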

\begin{proof}
    Let $L$ be an assignment of lists of size $k$ to the edges of $K_n.$ 
By the assumption that $\chi'_{\ell}(G_i) \le k$ there is a 
proper $L$-colouring $c_i$ of each $G_i.$ Let us define an $L$-colouring $c$ of $E(K_n)$ by $c(e)=c_i(e),$ where $i$ is the index of the unique $G_i$ containing $e.$ Note that since any vertex $v$ belongs to at most $r-1$ $G_i$'s we know that edges incident to $v$ are using colours from at most $r-1$ $c_i$'s. Since each $c_i$ is proper this means that for any fixed colour $v$ is incident to at most $r-1$ edges of this colour, showing there can be no monochromatic $K_{1,r}$ under $c$ as desired.
\end{proof}

We begin with the case of $2$ colours, by proving \Cref{thm:stars-2-col}.
\thmstars*
\begin{proof}
It is well known that the standard Ramsey number satisfies the same equalities, \cite{stars}. So by \eqref{ineq:trivial} we only have to show the corresponding lower bounds.

\subsection*{Case 1: $r$ even.}
We will make use of the following fact proved by Alspach and Gavlas \cite{alspach}. 
\begin{prop*}
Let $n$ be an even integer. $K_n$ can be partitioned into a single perfect matching and Hamilton cycles. 
\end{prop*}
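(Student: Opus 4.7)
The plan is to invoke the classical Walecki-style construction. Label the vertices of $K_n$ as $v_\infty$ together with $v_0, v_1, \ldots, v_{n-2}$, thinking of the latter as a regular $(n-1)$-gon (so the outer index set is $\mathbb{Z}_{n-1}$ with $n-1$ odd) and of $v_\infty$ as a central vertex. Define a base Hamilton cycle
\begin{equation*}
C_0 = v_\infty \, v_0 \, v_1 \, v_{-1} \, v_2 \, v_{-2} \, \cdots \, v_{(n-2)/2} \, v_{-(n-2)/2} \, v_\infty,
\end{equation*}
whose outer zig-zag visits every polygon vertex, and for each $i \in \{0, 1, \ldots, (n-2)/2 - 1\}$ let $C_i$ be the cycle obtained from $C_0$ by shifting every outer index by $i$ modulo $n-1$. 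The goal is to show that these $(n-2)/2$ cycles together with the unused edges exhaust $K_n$, with the leftover forming a perfect matching.

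The argument then reduces to three checks. First, each $C_i$ is clearly a Hamilton cycle, since the outer shift acts as a bijection on $v_0, \ldots, v_{n-2}$. Second, the $C_i$ are pairwise edge-disjoint. For this step I would classify each outer edge by its length $d \in \{1, \ldots, (n-2)/2\}$ (the shorter arc on the polygon) and each spoke by its outer endpoint, then read off from $C_0$ that its outer edges have lengths $1, 2, 3, \ldots, (n-2)/2, (n-2)/2, \ldots, 2, 1$ (after wraparound) — exactly two per length — while its two spokes are $v_\infty v_0$ and $v_\infty v_{-(n-2)/2}$. As $i$ runs through $0, \ldots, (n-2)/2 - 1$, rotations translate these edges around the polygon; using that $n-1$ is odd, I would verify that the two length-$d$ edges in $C_0$ lie in distinct rotation orbits, so their $n-2$ translates are all distinct, and likewise for the spokes.

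Third, I would identify the remaining edges explicitly. The previous count leaves exactly one edge of each length $d \in \{1, \ldots, (n-2)/2\}$ together with one spoke, giving $(n-2)/2 + 1 = n/2$ edges in total — exactly the size of a perfect matching. Tracking which specific edge of each length is omitted, and which spoke is omitted, a direct inspection shows these $n/2$ edges are pairwise vertex-disjoint, hence form a perfect matching as claimed. The main obstacle is the bookkeeping in the second step: one has to pick the zig-zag pattern so that the two length-$d$ edges of $C_0$ are genuinely in different rotation orbits, and a small miscount in the alternation would cause two rotations to share an edge, collapsing the construction. Once this is set up correctly, the parity of $n-1$ makes the orbit analysis go through cleanly.
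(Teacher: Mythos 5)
Your construction is correct, but note that the paper does not prove this proposition at all: it is quoted as a black box from Alspach and Gavlas, so there is no internal argument to compare against. What you have written out is the classical Walecki decomposition, which is indeed the standard proof of the quoted fact, and the details check out. Writing $n=2m$ and working with outer indices in $\mathbb{Z}_{2m-1}$, the consecutive differences along the outer path of $C_0$ are $1,2,\dots,2m-2$, so the lengths are $1,2,\dots,m-1,m-1,\dots,2,1$ as you claim; the two length-$\ell$ edges have vertex sums $0$ and $1$ modulo $2m-1$, and since a shift by $i$ changes the sum by $2i$ and $2$ is invertible modulo the odd number $2m-1$, the unique rotation carrying one onto the other is by $m\equiv -(m-1)$, which is not a difference of two shifts from $\{0,\dots,m-2\}$. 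Hence all translates are distinct, the leftover outer edges are exactly those with vertex sum $-1$ (a matching covering everything except $v_{m-1}$), and the leftover spoke is $v_\infty v_{m-1}$, completing the perfect matching. One phrasing to tighten: the two length-$\ell$ edges of $C_0$ are \emph{not} in distinct orbits of the full rotation group (that group is transitive on edges of a fixed length when $n-1$ is odd); the correct statement is the one above, that the unique rotation identifying them lies outside the range of shift differences you actually use. As for what each approach buys: the citation keeps the paper short and also covers the companion proposition on decompositions into cycles of a fixed length $m$ (which genuinely needs Alspach--Gavlas), whereas your argument makes the even case self-contained and elementary; for the purposes of the paper the citation suffices, but your proof is a valid and complete replacement for this particular statement.
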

Let $n=2r-2.$ By the above proposition we can partition $K_n$ into a perfect matching $G_1$ and $r-2$ Hamilton cycles $G_2,\ldots, G_{r-1}$. By Galvin's theorem \cite{Galvin} we know that $\chi'_{\ell}(G_i) \le 2$ and each vertex belongs to exactly $r-1$ of the $G_i$'s so by \Cref{thm:star-lower-bound} we are done.

\subsection*{Case 2: $r$ odd.}
In this case we make use of a different partitioning result of Alspach and Gavlas \cite{alspach}. 
\begin{prop*}
Let $n$ be an odd integer and $m$ an integer satisfying $4 \le m \le n.$ $K_n$ can be partitioned into cycles of length $m$ if and only if $m$ divides the number of edges of $K_n$. 
\end{prop*}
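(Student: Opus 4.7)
The necessity direction is immediate from edge counting: each $C_m$ factor contains exactly $m$ edges, so an $m$-cycle decomposition of $K_n$ forces $m \mid \binom{n}{2}$. For sufficiency the plan is to construct the decomposition explicitly, with the principal tool being the \emph{difference method} in the cyclic group $\mathbb{Z}_n$. Identify $V(K_n)$ with $\mathbb{Z}_n$; every edge $\{a,b\}$ has a well-defined difference $\pm(a-b)\in\{1,\ldots,(n-1)/2\}$. A single $m$-cycle $C$, together with its $n$ cyclic translates, yields a family of $n$ edge-disjoint copies of $C_m$ covering exactly those edges whose difference appears in $C$, each difference covered with multiplicity equal to its multiplicity in $C$. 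So the combinatorial task reduces to exhibiting a family of $m$-cycle ``starters'' whose difference multisets together enumerate $\{1,\ldots,(n-1)/2\}$ each exactly once.

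For the extreme case $m=n$ I would invoke the classical Walecki construction, producing $(n-1)/2$ edge-disjoint Hamilton cycles in $K_n$ for every odd $n$. For $m<n$ satisfying $m\mid\binom{n}{2}$, I would partition the difference set $\{1,\ldots,(n-1)/2\}$ into blocks of size roughly $m/2$ and for each block explicitly describe an $m$-cycle whose consecutive edges realise the differences in that block; generically this is a zig-zag pattern modelled on Walecki's. The precise form of the starter splits into cases according to the parities of $m$ and $n$ and the residue of $n$ modulo $m$, but in each generic case the construction is a short, direct recipe.

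The hardest step, and the one that absorbs most of the Alspach--Gavlas paper, is handling the sporadic or awkward instances where the clean cyclic construction fails to cover the differences exactly once, typically when $\gcd(m,n)$ is large or when $m$ is close to $n/2$. Here I would fall back on a composition principle: if $K_a$ admits an $m$-cycle decomposition and the complete bipartite graph $K_{a,b}$ admits one as well (the latter governed by Sotteau's theorem on cycle decompositions of complete bipartite graphs), then suitable gluings and blow-ups yield an $m$-cycle decomposition of $K_{a+b}$. Iterating this reduces the problem for large $n$ to a finite list of small base cases, each of which can be settled by an ad hoc construction and verified by hand.
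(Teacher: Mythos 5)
The paper does not prove this proposition at all: it is quoted as a theorem of Alspach and Gavlas \cite{alspach} and used as a black box (and in fact the statement for all $4\le m\le n$ combines Alspach--Gavlas, who settled the equal-parity cases, with later work of \v{S}ajna; the instance the paper actually invokes has $n$ odd and $m=r-1$ even). Your necessity argument is fine, but your sufficiency argument is a plan rather than a proof: every substantive step is deferred (``I would partition\dots'', ``I would explicitly describe\dots'', ``each of which can be settled by an ad hoc construction and verified by hand''). The entire content of the theorem lives in exactly those deferred constructions, which occupy the whole of the cited paper, so nothing here can be checked or executed as written.

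Beyond incompleteness, two of the steps would fail as described. First, the difference-method bookkeeping is off: an $m$-cycle has $m$ edges and hence $m$ differences counted with multiplicity, so a full $\mathbb{Z}_n$-orbit of a starter with distinct differences accounts for a block of $m$ difference classes, not ``roughly $m/2$'' (with blocks of size $m/2$ the $n$ translates contain $nm$ edges but the targeted classes contain only $nm/2$, so the translates cannot be edge-disjoint). Moreover $m \mid \binom{n}{2}$ does not imply $m \mid (n-1)/2$, so full-orbit starters alone can never partition the difference set; short-orbit or mixed configurations are unavoidable, and these are precisely the hard cases you set aside. Second, the proposed fallback via Sotteau's theorem cannot handle odd $m$: the complete bipartite graph $K_{a,b}$ contains no odd cycles whatsoever, so no gluing of $K_a$ with $K_{a,b}$ yields an odd-cycle decomposition of $K_{a+b}$. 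Odd $m$ is squarely within the scope of the statement, so the reduction to small base cases collapses there and a genuinely different composition (e.g.\ through complete tripartite pieces) would be required.
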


Let $n=2r-1.$ Let us first assume that $r \ge 5.$ Since $|E(K_n)|=n(n-1)/2=(2r-1)(r-1)$ by the above result we can partition $K_n$ into cycles of length $r-1.$ Since $r$ is odd these cycles are 
bipartite and have $\chi'_{\ell}(C_{r-1})=2.$ As they are $2$-regular and 
partition $E(K_n)$ we know that each vertex belongs to exactly $r-1$ of these cycles. Therefore, we are done by \Cref{thm:star-lower-bound}.

The case $r=1$ is immediate, so we are left with the case $r=3.$ Let $L$ be an assignment of lists of size $2$ to the edges of $K_5.$ 
Partition $K_5$ into two $5$-cycles $C_1,C_2.$ If we can properly colour both $C_1$ and $C_2$ using colours from the lists we are done. It is well-known and easy to see that the only way in which a $5$-cycle does not admit a $2$-colouring from its lists is if the lists are all the same. Therefore, we may assume that edges of one cycle, say $C_1$ have the same lists. We now colour all edges of $C_1$ using a single colour $c$ from their list 
and colour all edges of $C_2$ by arbitrary colours in their lists
which differ from $c.$ 
In this colouring there is no monochromatic $K_{1,3}$ as desired.
\end{proof}

Let us now consider the case of more colours. As in the case of $2$-colours all our upper bounds come from the ordinary Ramsey numbers, which were determined by Burr and Roberts in \cite{stars} and the trivial inequality \eqref{ineq:trivial}. The following two lemmas establish the two lower bounds claimed in \Cref{thm:stars-lower-bound-k-col}, completing its proof.

\begin{lem}
$$(r-1)k+1 \le \R(K_{1,r},k).$$
\end{lem}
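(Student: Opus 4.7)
The plan is to apply \Cref{thm:star-lower-bound} to a balanced vertex partition of $K_n$ for $n=(r-1)k$. Setting $n=(r-1)k$, I would partition $V(K_n)$ into $r-1$ classes $A_1,\ldots,A_{r-1}$ of size $k$ each, and decompose $E(K_n)$ into the $r-1$ within-class cliques $K_k[A_i]$ together with the $\binom{r-1}{2}$ between-class complete bipartite graphs $K_{k,k}[A_i,A_j]$ for $i<j$. This is a genuine partition of the edge set, and it is the obvious candidate list of pieces $G_i$ to feed into \Cref{thm:star-lower-bound}.

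Next I would verify the three hypotheses of the lemma. Each between-class piece $K_{k,k}[A_i,A_j]$ is bipartite of maximum degree $k$, so Galvin's theorem (\Cref{thm:galvin}) yields $\chi'_\ell(K_{k,k})=k$. Each within-class piece $K_k[A_i]$ has $\chi'_\ell(K_k)\le k$ by the H\"aggkvist--Janssen bound recalled in the introduction. Finally, each vertex $v\in A_i$ lies in exactly the within-class piece on $A_i$ together with the $r-2$ between-class pieces connecting $A_i$ to the other classes, so $v$ belongs to $1+(r-2)=r-1$ of the pieces.

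All hypotheses of \Cref{thm:star-lower-bound} are thus satisfied with the parameter $r-1$, and the lemma immediately gives $\R(K_{1,r},k)>(r-1)k$, which rearranges to the desired inequality $(r-1)k+1\le \R(K_{1,r},k)$.

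There is no real obstacle beyond spotting the correct decomposition; the proof is essentially a single application of \Cref{thm:star-lower-bound}, leveraging Galvin's theorem for the bipartite pieces and the H\"aggkvist--Janssen bound on $\chi'_\ell(K_k)$ for the clique pieces. In particular, no parity case analysis is needed, in contrast to the two-colour result \Cref{thm:stars-2-col}.
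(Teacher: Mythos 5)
Your proposal is correct and matches the paper's proof essentially verbatim: the same partition of $K_{(r-1)k}$ into $r-1$ cliques $K_k$ and the complete bipartite graphs between the classes, with Galvin's theorem handling the bipartite pieces, the H\"aggkvist--Janssen bound handling the cliques, and \Cref{thm:star-lower-bound} finishing the argument. Nothing to add.
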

\begin{proof}
Let $n=(r-1)k,$ partition the vertices of $K_n$ into sets $V_1,\ldots,V_{r-1},$ each of size $k.$ 
We let $G_i$ be the subgraph induced by $V_i$ and for $i\neq j$ 
we let $G_{i,j}$ be the complete bipartite subgraph 
with parts $V_i,V_j.$ By \Cref{thm:galvin} we know that 
$\chi'_{\ell}(G_{i,j})\le k$ and since by a result of H\"aggkvist and 
Janssen \cite{haggkvist} we know that $\chi'_{\ell}(K_k) \le k$ we also have 
$\chi'_{\ell}(G_i)\le k.$ 
Every vertex belongs to exactly $r-1$ of these subgraphs which 
partition $E(K_n)$, and we are done by \Cref{thm:star-lower-bound}.
\end{proof}

This completes the proof of \Cref{thm:stars-lower-bound-k-col} part (a). Before turning to part (b) we state a packing result of Gustavsson \cite{gustavsson} which we will use for its proof.
\begin{thm*}[Gustavsson~\cite{gustavsson}] 
For any graph $F$ there exists an $\eps = \eps(F) > 0$ and
$n_0 = n_0(F)$ such that for any graph $G$ on $n \ge n_0$ vertices with minimum
degree at least $(1 -\eps)n$ one can partition the edge set of $G$ into copies of $F$, provided: 
\begin{itemize}
\item $e(F)\mid e(G)$ and
\item $\gcd(F) \mid  \gcd(G),$ 
\end{itemize}
where $e(H)$ denotes the number of edges of a graph $H$ and 
$\gcd(H)$ denotes the greatest common divisor of the degrees 
of vertices in $H$. 
\end{thm*}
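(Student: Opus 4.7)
The plan is to establish the $F$-decomposition result via the modern iterative absorption method, since Gustavsson's original design-theoretic argument is intricate. The two divisibility conditions are clearly necessary: if $G$ decomposes into copies of $F$, then $e(F)$ must divide $e(G)$, and every vertex degree in $G$ must be a multiple of $\gcd(F)$ (hence so must their gcd). The content of the theorem is that, once $G$ is dense enough, these local obstructions are the only ones.

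\textbf{Step 1 (Absorber construction).} First I would carve out of $G$ a small ``absorbing'' subgraph $A$ with the following property: for every graph $R$ on $V(G)$, supported on a prescribed small edge set near $A$, satisfying $e(F)\mid e(A\cup R)$ and $\gcd(F)\mid \gcd(A\cup R)$, the graph $A\cup R$ admits an $F$-decomposition. The construction proceeds by first exhibiting, for each possible local ``defect'', a small \emph{gadget}, namely two $F$-decomposable graphs differing only on one prescribed edge. Then one takes many random vertex-disjoint copies of all such gadgets inside $G$. The minimum-degree hypothesis $\delta(G)\ge (1-\eps)n$ is used here to embed all gadgets simultaneously.

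\textbf{Step 2 (Near-decomposition by the nibble).} Next I would apply the semi-random R\"odl nibble (or the Pippenger--Spencer theorem) to the auxiliary hypergraph whose vertices are the edges of $G\setminus A$ and whose hyperedges are the $F$-copies in $G\setminus A$. Because $G$ is dense and nearly regular, this hypergraph has well-controlled degree and small codegree, so one obtains a near-perfect matching, i.e., an $F$-packing covering all but a tiny fraction of $E(G\setminus A)$.

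\textbf{Step 3 (Iteration and absorption).} Then I iterate: the uncovered remainder is sparse, so one re-runs the nibble inside a nested sequence of ``vortex'' subgraphs reserved within $A$, shrinking the leftover after each round. After a bounded number of rounds the final leftover $R$ is confined near $A$. Crucially, the divisibility invariants from the theorem statement are preserved throughout, since every removed $F$-copy changes $e(G)$ by $e(F)$ and every vertex-degree by a multiple of $\gcd(F)$. Hence $A\cup R$ satisfies the hypothesis of the absorber from Step 1, and an $F$-decomposition of $A\cup R$ completes the $F$-decomposition of $G$.

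\textbf{Main obstacle.} The hardest step is Step 1: producing absorbers flexible enough to correct an \emph{arbitrary} admissible leftover for a \emph{general} $F$. For specific $F$ (triangles, cliques) one can build absorbers by hand, but for arbitrary $F$ one needs a systematic supply of ``trades'' — pairs of $F$-decompositions of the same edge set differing on a controlled region. Establishing that sufficiently many trades exist, and that they can be combined to repair any local discrepancy compatible with both divisibility conditions, is the central technical difficulty; this is where Gustavsson's original argument spends most of its effort, and where the modern boosting framework (replicating a single algebraically-constructed trade inside $G$ many disjoint times via the min-degree hypothesis) provides the cleanest route.
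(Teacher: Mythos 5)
This statement is not proved in the paper at all: it is quoted verbatim as an external result from Gustavsson's thesis \cite{gustavsson} and used as a black box in the proof of the second lemma of Section 2.1. So there is no in-paper argument to compare yours against; what you have written is a sketch of the modern iterative-absorption proof of the $F$-decomposition theorem (in the style of Barber--K\"uhn--Lo--Osthus and Glock--K\"uhn--Lo--Montgomery--Osthus), which is a genuinely different route from Gustavsson's original design-theoretic argument. Your overall architecture is the right one: necessity of the divisibility conditions, a R\"odl-nibble near-decomposition, a vortex iteration to push the leftover into a reserved region, and an absorber to finish. The observation that removing a copy of $F$ preserves both invariants (each edge count drops by $e(F)$ and each degree drops by a degree of $F$, hence by a multiple of $\gcd(F)$) is correct and is indeed the reason the iteration can be closed.

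However, as a proof this has a genuine gap, and you have located it yourself: Step 1 is asserted, not carried out. Saying that one needs ``a systematic supply of trades'' and that ``establishing that sufficiently many trades exist \dots is the central technical difficulty'' is a description of the theorem, not a proof of it. Concretely, you would need to (i) construct, for an arbitrary $F$, gadgets that shift a single unit of degree (mod $\gcd(F)$) between two prescribed vertices and a single unit of edge count (mod $e(F)$) between prescribed locations, while remaining $F$-decomposable in both states; (ii) show these gadgets embed disjointly in any graph of minimum degree $(1-\eps)n$; and (iii) prove that compositions of such gadgets reach every divisibility-respecting leftover supported on the reserved region. None of (i)--(iii) is routine for general $F$ --- this is where both Gustavsson's thesis and the modern papers spend essentially all of their effort --- so the proposal should be graded as a correct plan with its central lemma missing rather than as a proof. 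For the purposes of this paper, of course, the theorem is legitimately cited rather than reproved.
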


\begin{lem}
For every $k\in \mathbb{N}$ there exists $w(k)\in \mathbb{N}$
such that the following holds. For every $k$ and $r \geq w(k)$
that are not both even, we have $$\R(K_{1,r}, k) = (r-1)k +2.$$
\end{lem}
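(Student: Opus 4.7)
By inequality \eqref{ineq:trivial} together with Burr--Roberts' upper bound $R(K_{1,r},k) \leq (r-1)k+2$ in the ``not both even'' regime, it suffices to prove the matching lower bound $\R(K_{1,r},k) > (r-1)k+1$. Setting $n = (r-1)k+1$, my plan is to invoke \Cref{thm:star-lower-bound}, i.e.\ to partition $E(K_n)$ into subgraphs of list chromatic index at most $k$ such that every vertex lies in at most $r-1$ parts.

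I will produce such a partition from Gustavsson's theorem applied to $F = K_{k,k}$. Specifically, let $H$ be a $k$-regular bipartite graph on some even number $v_H \in [2k,4k)$ of vertices placed arbitrarily inside $K_n$, and decompose $K_n \setminus H$ into copies of $K_{k,k}$ via Gustavsson. Both $H$ and every copy of $K_{k,k}$ are bipartite with maximum degree $k$, so by Galvin's theorem (\Cref{thm:galvin}) each has list chromatic index exactly $k$. Since all the subgraphs in this partition are $k$-regular, each vertex ends up in precisely $r-1$ of them: a vertex in $V(H)$ is in $H$ and in $(n-1-k)/k = r-2$ copies of $K_{k,k}$, while a vertex outside $V(H)$ is in $(n-1)/k = r-1$ copies of $K_{k,k}$.

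The main obstacle is arranging the divisibility hypotheses of Gustavsson's theorem for $G = K_n \setminus H$. The $\gcd$ condition is immediate since $\gcd(K_{k,k}) = k$ and $\gcd(G) = \gcd\bigl((r-1)k,(r-2)k\bigr) = k$. A short calculation reduces the condition $k^2 \mid e(G) = \tfrac{n(n-1)}{2} - \tfrac{v_H k}{2}$ to the single congruence
$$v_H \;\equiv\; (r-1) + (r-1)^2 k \pmod{2k},$$
which simplifies to $v_H \equiv r-1 \pmod{2k}$ when $r$ is odd and to $v_H \equiv r-1+k \pmod{2k}$ when $r$ is even. It is exactly here that the ``not both even'' hypothesis enters: in the first case $r-1$ is even, and in the second case $k$ is forced to be odd (so $r-1+k$ is even), hence in both cases the target residue class is even. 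One may therefore select an even $v_H \in [2k,4k)$ realising the congruence; a $k$-regular bipartite graph on $v_H$ vertices exists because $v_H$ is even and at least $2k$.

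Finally, the family of possible correction graphs $H$ has only finitely many members of order at most $4k$, so Gustavsson's parameters $\eps$ and $n_0$ can be taken uniform over this family, depending only on $k$. Choosing $w(k)$ large enough that $n \geq n_0$ and the minimum degree $n-1-k$ of $G$ exceeds $(1-\eps)n$, Gustavsson's theorem supplies the required $K_{k,k}$-decomposition of $G$, and together with $H$ this furnishes the partition that \Cref{thm:star-lower-bound} needs, completing the proof.
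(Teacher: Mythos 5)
Your proof is correct and follows essentially the same route as the paper: both reduce to \Cref{thm:star-lower-bound} and build the required partition of $E(K_{(r-1)k+1})$ into $k$-regular bipartite graphs by removing a small $k$-regular bipartite gadget to repair the divisibility condition $k^2 \mid e(G)$, then applying Gustavsson's theorem with $F=K_{k,k}$ and Galvin's theorem. The only (immaterial) difference is the gadget: the paper uses $t$ vertex-disjoint copies of $K_{k+1,k+1}$ minus a perfect matching, with $t \equiv e(K_n)/k \pmod{k}$, whereas you use a single $k$-regular bipartite graph whose even order is chosen to satisfy the congruence.
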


\begin{proof}
Let $n=(r-1)k+1.$ Therefore, $e(K_n)=\binom{n}{2}=(r-1)k((r-1)k+1)/2.$ Since, if $k$ is even $r$ must be odd and in particular, $2|r-1$ we know that $k \mid e(K_n).$ Let $t \equiv e(K_n)/k \bmod k.$ Let $G_1, \ldots, G_t$ be vertex disjoint subgraphs of $K_n$ each isomorphic to $K_{k+1,k+1}$ with a perfect matching removed, where we require $w(k) \ge 2k+1$ in order to have enough room ($n=(r-1)k+1 \ge (w(k)-1)k+1\ge 2k^2+1 \ge 2(k+1)t$, since $t \le k-1$). Let $G$ be the subgraph of $K_n$ obtained by removing 
the edges of all $G_i$'s and let $F=K_{k,k}.$ Note that $e(G) \equiv tk-tk(k+1) \equiv 0 \bmod k^2$ so $e(F)=k^2 \mid e(G).$ Furthermore, every vertex of $K_n$ not in any $G_i$ still has degree $(r-1)k$ in $G$ while any vertex of $G_i$ has degree $(r-1)k-k$ so $\gcd(G)=k=\gcd(F).$ Therefore, if we let $\eps=\eps(K_{k,k})$ and $n_0=n_0(K_{k,k})$ given by the above 
theorem, then for $w(k) \ge \max(n_0/k, 2/\eps)$ the above theorem applies, implying that $E(G)$ can be partitioned into $G_{t+1},\ldots, G_{q}$ all isomorphic to $K_{k,k}$. 

Since each $G_i$ is a $k$-regular bipartite graph Galvin's Theorem implies $\chi'_{\ell}(G_i) \le k$ and since $G_i$'s partition $E(G)$ we know that each vertex belongs to at most $(n-1)/k=r-1$ of the $G_i$'s so our \Cref{thm:star-lower-bound} applies and implies the result. 
\end{proof}

\subsection{Matchings}\label{subs:matchings}
%As mentioned in the introduction the ordinary Ramsey numbers ofmatchings were determined in 1975 by Cockayne and Lorimer \cite{cockayne_lorimer_1975} who have shown $R(rK_2,k)=rk+r-k+1$. In this section we determine the list Ramsey numbers of matchings up to a constant factor and conclude that, unlike in the previous subsection, the two Ramsey numbers differ significantly.

%If we were to find a matching of size $r$ in $K_n$, monochromatic or not, then $n$ better be at least $2r$, the number of vertices in $rK_2$. It turns out that if the number $k$ of colours is not too large compared to $r$, then this trivial lower bound of $2r$ on the list Ramsey number is asymptotically tight! That is, even if $n$ is just slightly larger than $2r$, there exists an assignment of lists of size $k$ to the edges of $K_n$, that any list-colouring of the edges creates a monochromatic $rK_2$ (which is an almost perfect monochromatic matching). 

%The number $k$ of colours becomes more visible in the value of the list Ramsey number once $k$ is larger than a logarithmic function of the size $r$ of the matching. In this case we determine the list Ramsey number up to an absolute constant factor.
In this section we will show the following bounds on the list Ramsey number of matchings.
\begin{thm} \label{thm:matching-precise}
Let $r,k \in \mathbb{N}$. 
If $2(k+1) \leq \log r$ then 
\begin{align*}
    2r\le &\: \R(rK_2,k) \le 2r+42r^{k/(k+1)}.
\end{align*} %\quad \text{else }\\
If $2(k+1) > \log r > 0$ then
\begin{align*}    
\frac{rk}{4\log (rk)}\le &\: \R(rK_2,k) \le \frac{34rk}{\log (rk)}.
\end{align*}
\end{thm}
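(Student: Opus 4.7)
The plan is to prove the upper bounds and the non-trivial lower bound via probabilistic constructions; the lower bound $\R(rK_2, k) \geq 2r$ is immediate since $rK_2$ has $2r$ vertices.

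For the upper bounds in both regimes, I would use a random list assignment. Fix a palette $[N]$ (size to be optimized per regime) and independently assign each edge $e \in E(K_n)$ a uniform random $k$-subset $L_e \in \binom{[N]}{k}$. Call a coloring $c : E(K_n) \to [N]$ \emph{good} if no color class contains an $r$-matching. A fixed $c$ is $L$-consistent with probability $(k/N)^{\binom{n}{2}}$ by independence, so by the union bound
\[
\P\bigl[\exists \text{ good $L$-consistent } c\bigr] \leq \mathcal{G} \cdot (k/N)^{\binom{n}{2}},
\]
where $\mathcal{G}$ is the number of good colorings on $K_n$ with palette $[N]$. The key step is bounding $\mathcal G$: each color class is $rK_2$-free, so by Erd\H{o}s--Gallai has at most $M' := \max\{\binom{2r-1}{2}, \binom{r-1}{2} + (r-1)(n-r+1)\}$ edges and a vertex cover of size at most $2r-2$. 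Encoding each good $c$ via its per-color vertex covers (or, in the near-clique case, via $(2r-1)$-subsets of $V(K_n)$) and applying an AM--GM count to the induced edge-assignments gives the bound. Choosing $N$ of order $r^{1/(k+1)}$ in the small-$k$ regime, and of order $\log(rk)$ in the large-$k$ regime, balances the two factors and yields the claimed upper bounds $2r + 42 r^{k/(k+1)}$ and $34 rk/\log(rk)$.

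For the lower bound $\R(rK_2, k) \geq rk/(4\log(rk))$, I would produce, for any list assignment $L$ on $K_n$ with $n \leq rk/(4\log(rk))$, an explicit good $L$-coloring. Since a uniformly random choice $c(e) \in L_e$ only yields $n \lesssim \sqrt{rk}$ via union bound or LLL, a more structured scheme is needed. The plan is a vertex-labelling approach: for each vertex $v$, independently pick a random subset $T_v$ of ``acceptable'' colors; for each edge $uv$, let $c(uv)$ be any color in $L_{uv} \cap (T_u \cup T_v)$, if one exists. Then every color class $c^{-1}(\gamma)$ has its edges covered by the vertex set $\{v : \gamma \in T_v\}$, and as long as these sets stay small---with high probability over random $T_v$ tuned to the target $n$ and $k$---the matching number of each class is below $r$. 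A Lov\'asz Local Lemma / alteration analysis of the two bad events (uncoverable edges and overloaded colors) yields the threshold.

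The main obstacle is bounding $\mathcal G$ tightly in the upper bound: for $n$ close to $2r$, $rK_2$-free graphs can be near-cliques on $2r-1$ vertices, so naive vertex-cover bounds lose too many factors; a careful case analysis splitting by near-clique versus sparse structure is essential. On the lower bound side, the vertex-labelling distribution must be chosen delicately to achieve the $rk/\log(rk)$ threshold, which is significantly beyond what off-the-shelf LLL-type arguments on the matching hypergraph provide.
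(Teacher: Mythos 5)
Your upper bound strategy contains a fatal gap: the union bound over all good colorings $c$, i.e.\ $\P[\exists\text{ good $L$-consistent }c]\le \mathcal{G}\cdot(k/N)^{\binom n2}$, is provably too lossy to reach the claimed bounds. Concretely, take any $N$ with $N(r-1)\gg n\log n$ and choose $(r-1)$-sets $S_1,\dots,S_N$ covering $V(K_n)$ in a spread-out fashion, letting $C_i$ be the set of edges meeting $S_i$. Every coloring with $c^{-1}(i)\subseteq C_i$ for all $i$ is good, and there are exactly $\prod_e d_e$ of them, where $d_e=|\{i:e\in C_i\}|$ satisfies $d_e\approx 2N(r-1)/n$ for every edge. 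Hence
$$\mathcal{G}\cdot(k/N)^{\binom n2}\;\ge\;\prod_{e}\frac{d_ek}{N}\;\approx\;\left(\frac{2k(r-1)}{n}\right)^{\binom n2},$$
which is at least $1$ whenever $n\le 2k(r-1)$; since the targets $2r+42r^{k/(k+1)}=(2+o(1))r$ and $34rk/\log(rk)=o(rk)$ both eventually fall below $2k(r-1)$ for $k\ge 2$, no sharpening of the count of $\mathcal{G}$ (AM--GM or otherwise) can rescue the method. The underlying problem is that $\mathcal{G}\cdot(k/N)^{\binom n2}$ is the \emph{expected number} of good $L$-consistent colorings, which wildly overcounts the event that at least one exists. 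The paper's proof avoids this by union bounding only over the $m^{N}$ assignments of one container per color and then bounding, for a fixed assignment, the probability that \emph{any} consistent $L$-coloring exists: this factorises as $\prod_e\P[\exists i\in L_e:e\in C_{j_i}]=\prod_e\bigl(1-\binom{N-d_e}{k}/\binom{N}{k}\bigr)$, and each factor stays bounded away from $1$ as long as $d_e/N$ does, even in the regime $d_ek/N\gg 1$ where your per-edge count of usable colors is vacuous. (The paper also restricts attention to $K_{n/2,n/2}$, so K\H{o}nig's theorem gives covers of size $r-1$ and the near-clique case of Erd\H{o}s--Gallai never arises.)

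Your lower bound idea is genuinely different from the paper's, and the trivial bound $2r$ is of course fine, but the vertex-labelling scheme as stated does not work. With independent per-vertex sets $T_v$, each colour must be included with probability at least about $\log n/k$ for all $\binom n2$ edges to be coverable, so the sets $\{v:\gamma\in T_v\}$ have expected size about $n\log n/k\approx r$; you then need \emph{every} one of the up to $k\binom n2$ distinct list-colours to have such a set of size at most $r-1$, and for small $r$ and large $k$ (say $r=2$) no local lemma or alteration step controls this. The fix is to move the randomness from vertices to colours: let each colour independently pick a uniform $(r-1)$-subset of $V(K_n)$ as its permitted cover, so the cover-size condition holds deterministically and only the edge-coverage events need a union bound, which succeeds for $n\lesssim rk/\log(rk)$. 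This is essentially a decoupled version of what the paper does; there, each colour receives a uniform type in $[t]$ with $t=\lfloor k/(2\log(rk))\rfloor$ and inherits its cover from an extremal Cockayne--Lorimer colouring with $t$ colours, an argument that has the advantage of generalising verbatim to arbitrary $H$ via $R(H,t)$.
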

Theorem~\ref{thm:matchings} is now an immediate consequence of
Theorem~\ref{thm:matching-precise} and \eqref{eq:cockayne-lorimer}.

The proof of Theorem~\ref{thm:matching-precise} appears in the
following two lemmas. 
Our arguments below aim to illustrate as well the ideas we apply
for the  general setting in the next subsection, hence they  
are slightly more complicated than necessary. 

We start with the lower bound. 
\begin{lem}\label{lem:lower-bound-mathcings}
Assuming $r,k \in \mathbb{N}$ such that $rk>1$ we have
$$\R(rK_2,k) \ge \max\left(2r,\:\frac{(r-1)k}{2\log (rk)}\right).$$
\end{lem}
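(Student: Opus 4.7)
The bound $\R(rK_2,k)\ge 2r$ is immediate, since any monochromatic $rK_2$ needs $2r$ vertices and cannot sit inside $K_{2r-1}$. The substantive part is the second bound, for which I plan to show: for $n=\lfloor(r-1)k/(2\log(rk))\rfloor$ and every list assignment $L$ with $|L_e|=k$, there exists an $L$-colouring of $E(K_n)$ in which each colour class has vertex cover of size at most $r-1$. Such a colour class has matching number at most $r-1$, so no monochromatic $rK_2$ can appear.

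The construction proceeds by independently assigning to each colour $c$ appearing in some list a uniformly random ``centre set'' $S_c\subseteq V(K_n)$ of size $r-1$. Call $c\in L_e$ \emph{good for} an edge $e$ if $S_c$ meets the endpoints of $e$. If every edge has at least one good colour in its list, colour each edge with any good colour; the class of colour $c$ then lies inside the subgraph of edges incident to $S_c$, whose vertex cover $S_c$ has size $r-1$ and hence matching number at most $r-1$, which is exactly what is needed.

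It remains to argue that good colours exist for every edge with positive probability. A direct count gives $\P[S_c\cap e=\emptyset]=\binom{n-2}{r-1}/\binom{n}{r-1}=(n-r)(n-r+1)/(n(n-1))\le 1-(r-1)/n$. Independence of the $S_c$ across $c\in L_e$ then yields $\P[e\text{ has no good colour}]\le (1-(r-1)/n)^k\le e^{-(r-1)k/n}$, and a union bound over the $\binom{n}{2}$ edges gives total failure probability at most $\binom{n}{2}e^{-(r-1)k/n}$. Substituting $n=\lfloor(r-1)k/(2\log(rk))\rfloor$ makes $e^{-(r-1)k/n}\le (rk)^{-2}$, and one checks the resulting expression is less than $1$.

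The main step to get right is the choice of target property for the colour classes. Trying to force each class to have fewer than $r$ edges only yields $n=O(\sqrt{rk})$ from a naive first-moment argument; the conceptual improvement is to demand instead that each class have a small vertex cover -- a weaker property that still bars an $r$-matching -- which is exactly what random centre sets deliver. The remainder is routine bookkeeping: no Lov\'asz local lemma or alteration step is needed, and the edge case $r=1$ is trivially covered by the $2r$ bound since $(r-1)k/(2\log(rk))=0$.
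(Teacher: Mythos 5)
Your proof is correct, and its key ingredient differs from the paper's even though the two arguments share the same first-moment skeleton. The paper reduces the list problem to the ordinary Ramsey number with fewer colours: it fixes a $t$-colouring $c$ of $E(K_n)$ with no monochromatic $rK_2$ for $t=\lfloor k/(2\log(rk))\rfloor$, which exists by the Cockayne--Lorimer formula $R(rK_2,t)=(r-1)t+r+1$, then assigns each colour of the universe a uniformly random \emph{type} in $[t]$ and shows that with positive probability every edge $e$ has a colour of type $c(e)$ in its list; this is exactly the mechanism later generalised as \Cref{thm:lower-bound}. You instead build the $rK_2$-free ``containers'' by hand: each colour gets an independent uniformly random $(r-1)$-set $S_c$ of centres, and a colour class consisting only of edges meeting $S_c$ has a vertex cover of size $r-1$, hence matching number at most $r-1$ (only the trivial inequality $\nu\le\tau$ is needed, not K\"onig's theorem). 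The computations match: in both arguments each colour ``hits'' a given edge with probability $\Theta(\log(rk)/k)$, giving edge failure probability at most $(rk)^{-2}$ and a valid union bound since $n\le rk/(2\log (rk))$. What the paper's route buys is generality --- it works verbatim for any $H$ given a lower bound on $R(H,t)$, and it powers \Cref{thm:hypergraph-lower,thm:non-l-partite,thm:l-partite} --- while yours buys self-containedness (no appeal to Cockayne--Lorimer) and a pleasing duality with the paper's upper-bound argument for matchings, which uses the very same vertex-cover containers. The only bookkeeping worth adding is that you may assume $\lfloor (r-1)k/(2\log(rk))\rfloor\ge 2r$ (otherwise the first term of the maximum already gives the claim), which guarantees $n\ge r-1$ so that the sets $S_c$ exist and $1-(r-1)/n\ge 0$.
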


\begin{proof}
Let $n=\max\left(2r-1,\:(r-1)\cdot\left\lfloor\frac{k}{2\log
      (rk)}\right\rfloor+r\right).$ 
% Changed slightly the $n$, because I thought this is exactly what we needed.
Our task is to show that for any assignment $L$ of lists of size $k$ to $E(K_n)$ we can choose an $L$-colouring without a monochromatic $rK_2.$
This is clear if the first term of the maximum is greater or equal than the
second, because then $rK_2$ has more vertices than $K_n$. 
So we may assume $\frac{k}{2\log (rk)}\ge 2.$ Let $t=\left\lfloor\frac{k}{2\log (rk)}\right\rfloor\ge 2.$

Let $c:E(K_n) \to [t]$ be a $t$-colouring of $E(K_n)$ without a monochromatic $rK_2,$ which exists 
since $R(rK_2,t)=(r-1)t+r+1>n,$ using \eqref{eq:cockayne-lorimer}.

We split all colours in $\cup_{e\in E(K_n)} L_e$ into $t$ types indexed by $[t]$, with each colour being assigned a type independently and uniformly at random.
Let $B_e$ denote the event that no colour in $L_e$ got assigned the type $c(e)$. Then 
$$\P(B_e)=\left (1-\frac1t\right)^k\le \left (1-\frac{2\log (rk)}k\right)^k \le \frac1{r^2k^2}.$$
So by the union bound we obtain:
$$\bigcup_{e \in E(K_n)} \P(B_e)\le \frac{n^2}{2r^2k^2}<1,$$
where we used $k \ge 2,$ which follows from $\frac{k}{2\log (rk)}\ge 2.$ %(1/2log(rk)+1/k)<1

Thus there is an assignment of types to colours appearing in the lists 
such that for every $e\in E(K_n)$ there is a colour $c'(e)$ of type $c(e)$ in $L_e.$ Note that $c'$ is an $L$-colouring of $K_n$ with no monochromatic $rK_2,$ since otherwise there would be a monochromatic $rK_2$ using only one type of colours, contradicting our choice of $c.$
\end{proof}

We now turn to the upper bounds. 
Once again we need to distinguish between the two regimes.
\begin{lem}\label{lem:upper-bound-mathcings}
Let $r,k \in \mathbb{N}.$ If $2(k+1)\le \log r$ then we have 
$$\R(rK_2,k) \le 2r+42r^{k/(k+1)},$$
and else we have
$$\R(rK_2,k) \le 34rk/\log (rk).$$
\end{lem}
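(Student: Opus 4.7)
My plan is to prove both upper bounds by the probabilistic method, in direct analogy with \Cref{lem:lower-bound-mathcings} but in the reverse direction. Fix $n$ as in the claim in either regime and an integer parameter $N\ge k$ to be optimised later. For each edge $e\in E(K_n)$ independently let $L_e$ be a uniformly random element of $\binom{[N]}{k}$. Let $X$ count the $L$-colourings $c:E(K_n)\to[N]$ that avoid a monochromatic $rK_2$. By linearity of expectation
$$\mathbb{E}[X]=P(N,n)\,(k/N)^{\binom{n}{2}},$$
where $P(N,n)$ is the number of $[N]$-colourings of $E(K_n)$ in which every colour class is $rK_2$-free. If we can choose $N,n$ with $\mathbb{E}[X]<1$, then some $L$ achieves $X=0$, which gives $\R(rK_2,k)\le n$.

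The whole proof then reduces to bounding $P(N,n)$. For this I would exploit that any $rK_2$-free colour class has matching number at most $r-1$ and therefore admits a vertex cover of size at most $2(r-1)$. Summing first over the ordered $N$-tuple of covers $(S_1,\dots,S_N)$ (at most $n^{2(r-1)N}$ choices) and then over valid colour assignments, where each edge $e=uv$ must go to some colour $i$ with $S_i\cap\{u,v\}\ne\emptyset$, one gets a bound of the form $P(N,n)\le n^{2(r-1)N}\prod_e d_e$, where $d_e$ is the number of valid colours for $e$. A crude AM-GM on $\prod_e d_e$ combined with the identity $\sum_e d_e\le 2(r-1)(n-1)N$ gives $\prod_e d_e\le (4(r-1)N/n)^{\binom{n}{2}}$, and after the $N$'s cancel against $(k/N)^{\binom{n}{2}}$ we obtain
$$\mathbb{E}[X]\le n^{2(r-1)k}\,(4(r-1)k/n)^{\binom{n}{2}}.$$

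For the large-$k$ regime ($2(k+1)>\log r$), with $n$ of order $rk/\log(rk)$ the factor $4(r-1)k/n$ is of order $\log(rk)$, so the crude estimate is not sub-unit on its own; here I would refine either by taking $N$ well above $k$ so that $(k/N)^{\binom{n}{2}}$ contributes a factor $\exp(-\Omega(n^2\log\log(rk)))$, or by a direct supersaturation argument showing $P(N,n)$ is already tiny once $n$ exceeds $R(rK_2,N)$ by a constant factor; either route delivers the claimed $34rk/\log(rk)$. For the small-$k$ regime I would set $n=2r+s$ with $s=\Theta(r^{k/(k+1)})$ and sharpen the counting to use that an edge with both endpoints outside most $S_i$ has very few valid choices: splitting edges by the number of $S_i$ they hit and applying a Shearer- or Kruskal--Katona-style bound on $\prod_e d_e$ should replace the averaged $(4(r-1)N/n)^{\binom{n}{2}}$ by a fractional power closer to what is dictated by the extremal structure. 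Balancing the polynomial cost $2(r-1)k\log n$ against the resulting gain (of order $rs\log(s/k)$) then reproduces the exponent $k/(k+1)$.

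The main obstacle will be the small-$k$ regime: because $n$ is so close to $2r$, any constant loss per edge in the bound on $P(N,n)$ drowns out the savings and the crude AM-GM step above is too lossy. Pinning down the $k/(k+1)$ exponent requires a structural insight into $rK_2$-free colourings that goes beyond vertex-cover size, for instance tracking the joint distribution of (cover size, cover intersection pattern) across colours, or equivalently making finer use of the Erd\H{o}s--Gallai extremal structure. Once the refined counting inequality is in place, the verification of the numerical constants $34$ and $42$ should be a routine calculation.
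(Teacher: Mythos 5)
The core of your plan --- showing $\mathbb{E}[X]<1$ for the expected \emph{number} of good $L$-colourings --- cannot succeed in the stated ranges, and the difficulty is not that your upper bound on $P(N,n)$ is too crude: there is a matching lower bound. Take $S_1,\dots,S_N$ to be well-spread sets of size $r-1$ (note it must be $r-1$, not $2(r-1)$: a colour class with a vertex cover of size $2(r-1)$ can still contain an $rK_2$, whereas one inside the star of an $(r-1)$-set cannot); every map sending each edge $e$ to some colour $i$ with $S_i\cap e\neq\emptyset$ is a good colouring, so $P(N,n)\ge\prod_e d_e$ and $\mathbb{E}[X]\ge\prod_e(d_ek/N)$. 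Since a typical edge meets about $2(r-1)N/n$ of the $S_i$, this is roughly $\bigl(2(r-1)k/n\bigr)^{\binom{n}{2}}$: the $N$-dependence cancels from below exactly as it does in your upper bound, so enlarging the palette does not help, and the quantity is at least $1$ for all $n\lesssim 2(r-1)k$. In particular, at $n=34rk/\log(rk)$ the base is about $\log(rk)/17$, so $\mathbb{E}[X]$ is astronomically large there; the first-moment method gets stuck around $n\approx 2rk$, which is weaker even than the trivial bound $\R(rK_2,k)\le R(rK_2,k)=rk+r-k+1$. The paper's proof instead bounds the probability that a good colouring \emph{exists}: it takes a union bound over the $\binom{n}{r-1}^{k+t}$ ways of assigning a cover $C_{j_i}$ to each of the $k+t$ colours (see \eqref{eq:ub}), and for a fixed assignment the existence event factorises over edges (by independence of the lists) into factors $\P(\exists i\in L_e: e\in C_{j_i})$, each of which is capped at $1$ --- precisely the cap that your per-edge factor $d_ek/N$ lacks whenever $d_e>N/k$.

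Two further remarks. The paper restricts to the bipartite half $K_{n/2,n/2}$ so that K\"onig's theorem yields covers of size $r-1$; your Gallai covers of size $2(r-1)$ would make the ``containers'' contain essentially all of $E(K_n)$ once $n$ is close to $2r$, leaving no sparseness to exploit in the small-$k$ regime. Conversely, your worry that the exponent $k/(k+1)$ requires structure beyond vertex covers is unfounded: covers of size $r-1$, an auxiliary palette of $t\approx(k-1)n/(20r^{k/(k+1)})$ extra colours, and the existence-probability accounting already deliver it. (A minor point: the number of cover tuples in your count is $n^{2(r-1)N}$, not $n^{2(r-1)k}$.)
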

\begin{proof}
First notice that when $r=1$ or $k=1$ the result is immediate, so we
assume $r,k\ge 2$ throughout the proof. In order to show an upper
bound $\R(G,k)\le n$, we need to find a list assignment $L$ of lists
of size $k$ to each edge of $K_n$ in such a way that there is no way
of $L$-colouring $K_n$ without having a \mc 
\ copy of $G.$

Before proceeding with the proof, let us give some intuition for the next step. We are going to choose the lists by assigning each edge a uniformly random subset of colours from a slightly larger universe. Our goal then is to show that with probability less than one our random assignment of lists $L$ has an $L$-colouring having no \mc{} $rK_2$. We now take a union bound over all possible colourings of $K_n$ having no \mc{} $rK_2$ and check what is the probability that a fixed one is an $L$-colouring. The fact every edge misses a random set of colours makes this probability rather low. 

For each edge of $K_n$ we choose independently and uniformly at random a list of size $k$ from the universe $U$ of $k+t$ colours. For now we do not 
specify the values of $n$ and $t$ since they will differ depending on which of the two regimes we are considering, we will however assume that $n$ is even.

Let $B$ denote the event that there is a colouring $c$ from our lists having no \mc{} $rK_2$. Our goal is to show $\P(B)<1.$ Let us restrict attention to the complete bipartite graph $H=K_{n/2,n/2}$ within our $K_n.$ If $B$ happens this means that there is an edge colouring $c$ of $H$ for which every colour class contains no matching of size $r.$ Since $H$ is bipartite K\"onig's theorem implies that every colour class has a cover of size at most $r-1.$

For any subset $S$ of vertices of $H$ 
of size $|S|=r-1$ consider the subgraph of $H$ on the same vertex set containing all the edges of $H$ incident to a vertex in $S$. Denote these subgraphs by $C_1,\cdots C_m,$ where $m=\binom{n}{r-1}.$

The above observation implies that if $B$ happens, every colour class of $c$ on $H$ is completely contained within some $C_i.$ For all $i \in U$ we denote by $c_i$ the subgraph of $H$ made by the $i$-th colour class of $c$. Then
\begin{align}
\P(B) & \le \P(\exists \text{ an $L$-colouring }c:E(H)\to U \text{ s.t. } \forall i\in U, \: \exists j\in[m]: c_i \subseteq C_j) \nonumber \\
 &\le \sum_{j_1, \ldots, j_{k+t} \in [m]} \:\P(\exists \text{ an $L$-colouring }c:E(H)\to U  \text{ s.t. } \forall i\in U: c_i \subseteq C_{j_i}) \nonumber  \\
 & \le m^{k+t} \max_{j_1, \ldots, j_{k+t} \in [m]} \:\P(\exists \text{ an $L$-colouring }c:E(H)\to U \text{ s.t. } \forall i\in U: c_i \subseteq C_{j_i}) \nonumber \\
&= m^{k+t} \max_{j_1, \ldots, j_{k+t} \in [m]} \:\P(\forall e \in E(H), \exists i\in L_e: e \in C_{j_i}). \label{eq:ub}
\end{align}
Let us now bound the last term. For fixed values $j_1, \ldots ,
j_{k+t}$, let $d_e$ denote the number of $C_{j_i}$ to which edge $e$
belongs. As each $C_{j}$ has at most $(r-1)n/2$ edges, we have that
$\sum_{e\in E(H)} d_e \le (k+t)(r-1)n/2.$

\begin{align}
    \P(\forall e \in E(H), \exists i\in L_e: e \in C_{j_i})
    & = \prod_{e \in E(H)}\P(\exists i\in L_e: e \in C_{j_i}) \nonumber \\
    & = \prod_{e \in E(H)} \left(1-\binom{k+t-d_e}{k}/\binom{k+t}{k}\right) \nonumber \\
    & = \prod_{e \in E(H)} \left(1-\left(1-\frac{d_e}{k+t}\right) \cdots\left(1-\frac{d_e}{t+1}\right)\right) \nonumber  \\
    & \le \prod_{e \in E(H)} \left(1-\left(1-\frac{d_e}{t+1}\right)^k\right) \nonumber \\
    & \le \left(1-\left(1-\frac{\widetilde{d_e}}{t+1}\right)^k\right)^{n^2/4}, \label{eq:ub2}
\end{align}
where in the first inequality we used the independence of the assignment of lists between edges, $\widetilde{d_e}:=\frac{\sum_{e\in E(H)} d_e}{|E(H)|} \le \frac{(k+t)(r-1)}{n/2}$ and we used Jensen's inequality. Combining \eqref{eq:ub} and \eqref{eq:ub2} we obtain:
\begin{align}
    \P(B) & \le \binom{n}{r-1}^{k+t} \left(1-\left(1-\frac{\widetilde{d_e}}{t+1}\right)^k\right)^{n^2/4}\nonumber \\
    & \le \left( \frac{en}{r-1}\right)^{(r-1)(k+t)} \left(1-\left(1-\frac{2(r-1)(k+t)}{n(t+1)}\right)^k\right)^{n^2/4}. \label{eq:ub3}
\end{align}
At this point we proceed differently depending on the relation between $k$ and $r$. In the first case we will assume $k$ to be significantly smaller than $r$, specifically we assume $2(k+1)\le \log r.$ We choose $t=(k-1)\cdot\ceil{\frac{n}{20r^{k/(k+1)}}}-1$ and our goal is to show that for $n=2r+2\cdot\ceil{20r^{k/(k+1)}}$ we have $\P(B)<1.$
\vspace{-0.5cm}
\begin{align*}
    \log \P(B)
    & \le \log \left(\left( \frac{en}{r-1}\right)^{(r-1)(k+t)} \left(1-\left(1-\frac{2(r-1)(k+t)}{n(t+1)}\right)^k\right)^{n^2/4}\right)\\
    & < \left(1+\log \frac{n}{r-1}\right) (r-1)(k+t)-\frac{n^2}4 \cdot \left(\frac{n-2r}{n}-\frac{k-1}{t+1}\right)^k\\
    & < 6r(k-1)\left(1+\ceil{\frac{n}{20r^{k/(k+1)}}}\right)-\frac{n^2}4\cdot\left(\frac{20r^{k/(k+1)}}{n}\right)^k \\
    & < \frac{12r(k-1)n}{20r^{k/(k+1)}}-r^2\cdot\left(\frac{20r^{k/(k+1)}}{n}\right)^k \\
    %& = \frac{rn}{20r^{k/(k+1)}}\cdot\left(6(k-1)-r\cdot\left(\frac{20r^{k/(k+1)}}{n}\right)^{k+1}\right)\\
    & = \frac{rn}{20r^{k/(k+1)}}\cdot\left(12(k-1)-\left(\frac{20r}{n}\right)^{k+1}\right)\\
    & < \frac{rn}{20r^{k/(k+1)}}\cdot\left(12(k-1)-2.5^{k+1}\right)< 0,
\end{align*}

where in the second inequality for the second term we used $\log (1-x)
\le -x,$ for $x<1,$ and $1-\frac{2(r-1)(k+t)}{n(t+1)}\ge
1-\frac{2r(k+t)}{n(t+1)}=\frac{n-2r}{n}-\frac{2r(k-1)}{n(t+1)} >
\frac{n-2r}{n}-\frac{k-1}{t+1}$ where the last inequality follows
since $n > 2r.$ In the third inequality for the first term we used
$\left(1+\log \frac{n}{r-1}\right) \le 1+\log 88 \le  6.$ In the
fourth inequality we used $1+ \ceil{x}<2x$ when $x>2$ for the first
term and $n > 2r$ for the second, while in the last inequality we used
$\log r \ge 2(k+2),$ to get:
$\frac{20r}{n}\ge\frac{10}{1+20r^{-1/(k+1)} +(2r)^{-1}}\ge 
\frac{10}{1+21/e^2}>2.5.$

For the second case, when $\log r < 2k+2,$ we let $n=2\ceil{16rk/\log (rk)}$ and $t=k$ and use \eqref{eq:ub3} to get:
\begin{align*}
    \log \P(B) & \le \log \left(\left( \frac{en}{r-1}\right)^{(r-1)(k+t)} \left(1-\left(1-\frac{2(r-1)(k+t)}{nt}\right)^k\right)^{n^2/4}\right )\\
    & \le 2rk \log \left( \frac{en}{r-1}\right)+\frac{n^2}4\log \left(1-\left(1-\frac{\log (rk)}{8k}\right)^k\right)\\
    & \le 2rk  (8+\log k) +\frac{n^2}{4}\log \left(1-e^{-\log (rk)/4}\right)\\ 
    & \le2rk (8+\log k)- (rk)^{-1/4}n^2/4\\
    & \le 16rk\left(k^{1/4}- 16(rk)^{3/4}/\log^2 (rk)\right)\\
    & \le 16rk\left(k^{1/4}- (rk)^{1/4}\right)<0,
\end{align*}
where in the first term of the third inequality we used $\log \left( \frac{en}{r-1}\right)\le 1+\log \left(128k \right)\le 8+\log k,$ while in the second term we used $(1-x) \ge e^{-2x},$ given $x \le 1/2,$ with $x=\frac{\log rk}{8k} \le \frac{2k+2+\log k}{8k}\le 1/2.$ In the fifth inequality we used $8+\log k \le 8k^{1/4}$ and in the sixth $\log x \le 4x^{1/4}.$
\end{proof}
%\remark{If $n=2r+q$ the thing we want is: $$6\log \left( \frac{e(2r+q)} {r-1}\right) \cdot (r-1)(k-1)(4r+2q)^{k-1}<q^{k+1}$$}

\subsection{General bounds.}
In this subsection we give our bounds for general graphs and
hypergraphs.

\subsubsection{Upper bounds.}
We start with upper bounds. The idea closely follows the one presented in the previous section with the main distinction that now it is not so easy to find the appropriate sets $C_j$. Note that the only property we required from $C_i$'s is that the edge set  of every graph not containing a copy of $rK_2$ is contained in some $C_i.$ In the general setting we will find such sets by using the container method introduced by Saxton and Thomason \cite{saxton} and Balogh, Morris and Samotij \cite{balogh}. Specifically, we make use of the following theorem (Theorem 2.3 in \cite{saxton}).

\begin{thm}\label{thm:h-free-graphs}
Let $H$ be an $\ell$-graph with $|E(H)| \ge 2$ and let $\eps>0.$ There is a constant $c>0$ such that for any $n \ge c$ there is a collection of $\ell$-graphs $C_1,\ldots, C_m$ on the vertex set $[n]$, such that 
\begin{enumerate}[label=(\alph*), ref=(\alph*)]
    \item \label{itm:1} every $H$-free $\ell$-graph on the vertex set $[n]$ is contained within some $C_i,$ 
    \item \label{itm:2} $|E(C_i)|\le (\pi(H)+\eps)\binom{n}{\ell}$ and
    \item \label{itm:3} $\log m \le cn^{\ell-1/m(H)}\log n.$
\end{enumerate}
\end{thm}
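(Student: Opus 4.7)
The plan is essentially to invoke the hypergraph container method of Saxton--Thomason~\cite{saxton} and Balogh--Morris--Samotij~\cite{balogh}: since the statement above is quoted verbatim from~\cite{saxton}, the cleanest route in this paper is a one-line citation. If one nevertheless wanted to reproduce the argument, the outline would be as follows.

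First, I would construct a deterministic algorithm that, given any $H$-free $\ell$-graph $I$ on $[n]$, produces a pair $(S(I), C(I))$ in which $S(I)\subseteq E(I)$ is a small ``fingerprint'' and $C(I)$ is an $\ell$-graph with $E(I)\subseteq E(C(I))$, designed so that $C(I)$ depends only on $S(I)$. The number of distinct containers $m$ is then bounded by the number of possible fingerprints, which, if $|S(I)|\le s$, is at most $\binom{\binom{n}{\ell}}{s}$; this will give \ref{itm:3} through $\log m \le s \log\binom{n}{\ell}$.

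Second, the algorithm processes candidate edges in an order determined by a co-degree potential on $\binom{[n]}{\ell}$ that measures how many ``partial copies'' of $H$ an edge participates in inside the current container $C_t$. At each step an edge $e$ of high potential is inspected: if $e\in E(I)$ it is added to the fingerprint, otherwise it is deleted from $C_t$, and in both cases further edges tied to $e$ via the co-degree function are discarded in bulk. The parameter $m(H)$ is exactly what governs the trade-off between fingerprint growth and container shrinkage, yielding $|S(I)| \le c\, n^{\ell-1/m(H)}$ and hence \ref{itm:3}. To secure \ref{itm:2} one then invokes hypergraph supersaturation: any $\ell$-graph with more than $(\pi(H)+\eps)\binom{n}{\ell}$ edges contains many copies of $H$, so if the algorithm halted at a container exceeding this density, the inspection step would already have forced a copy of $H$ inside $I$, contradicting $H$-freeness.

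The main obstacle, and the reason this theorem is substantial rather than routine, lies in the combinatorial design of the co-degree potential together with the inductive bookkeeping required to attain the sharp exponent $1/m(H)$ while simultaneously preserving \ref{itm:1} and \ref{itm:2}; these are precisely the technical contributions of~\cite{saxton,balogh}. In practice this theorem is used as a black box, and I would do the same here.
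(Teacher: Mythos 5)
Your proposal matches the paper's approach exactly: Theorem~\ref{thm:h-free-graphs} is introduced in the paper as Theorem~2.3 of Saxton--Thomason~\cite{saxton} and is used purely as a black box, with no proof supplied. Your sketch of the fingerprint/container algorithm is a fair summary of how the cited result is established, but, as you correctly conclude, a one-line citation is all that is required here.
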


We now give an upper bound on the list Ramsey number for a fixed graph as the number of colours becomes large.

\thmub*
\begin{proof}
We once again choose the lists for each edge uniformly at random out of the universe of $k+t$ colours. As before, $B$ will denote the event that there is a colouring from our lists having no \mc{} $H$. Once again our goal is to show $\P(B)<1.$

Let $\eps>0,$ Theorem \ref{thm:h-free-graphs} provides us with a constant $c=c(\eps,H)$ and a collection of $\ell$-graphs $C_1,\ldots, C_m$ satisfying the conditions \ref{itm:1},\ref{itm:2} and \ref{itm:3}, where we will choose the value of $n\ge c$ later.
We once again obtain as in \eqref{eq:ub}
$$\P(B) \le  m^{k+t} \max_{j_1, \ldots, j_{k+t} \in [m]} \:\P(\forall e \in E(H), \exists i\in L_e: e \in C_{j_i}).$$
Once again for fixed values of $j_i$ we define $d_e$ to be the number of $C_{j_i}$ that contain the edge $e,$ and denote $\widetilde{d_e}=\sum_{e \in E\left(K_n^{(\ell)}\right)} d_e/\binom{n}{\ell}\le (k+t)(\pi(H)+\eps),$ where the last inequality follows from \ref{itm:2}. Once again as in \eqref{eq:ub2} we obtain: 
\begin{align*}
    \P(\forall e \in E(H), \exists i\in L_e: e \in C_{j_i}) \le \left(1-\left(1-\frac{\widetilde{d_e}}{t+1}\right)^k\right)^{\binom{n}{\ell}}.
\end{align*}
We choose $t=\ceil{k/\eps}$, and require $2\eps<1-\pi(H)$ to get
\begin{align*}
    \log \P(B) & \le (k+t)\log m+\binom{n}{\ell}\log \left(1-\left(1-\frac{\widetilde{d_e}}{t+1}\right)^k\right)\\
    & \le (k+t)cn^{\ell-1/m(H)}\log n-\left(1-\frac{(k+t)(\pi(H)+\eps)}{t}\right)^k\binom{n}{\ell}\\
    & \le ck(1+2/\eps)n^{\ell-1/m(H)}\log n-(1-\pi(H)-2\eps)^k\frac{n^\ell}{\ell^\ell}.
\end{align*}

where we used $\frac{(k+t)(\pi(H)+\eps)}{t}\le (1+\eps)(\pi(H)+\eps) \le \pi(H)+2\eps$ where in the last inequality we used $\pi(H)+\eps < 1-\eps.$ The last expression will be less than $0$ provided
\begin{align}\label{eq:ub4}
\frac{ck(1+2/\eps)\ell^\ell}{(1-\pi(H)-2\eps)^k}<n^{1/m(H)}/\log n.
\end{align}
Given $3\eps < 1-\pi(H),$ for large enough value of $k$ this holds for $n=(1-\pi(H)-3\eps)^{-km(H)}$.
\end{proof}
In the above argument it was important that $H$ was fixed, since the constant $c$ coming from the container theorem depends on $H.$ The dependence of $c$ on $H$ is somewhat complicated, but by analysing the proof of \Cref{thm:h-free-graphs} it should be possible to obtain good bounds for various families of graphs. We illustrate this by obtaining an explicit bound on $\R(K_r^{(\ell)},k)$. We start with a slightly weaker version of \Cref{thm:h-free-graphs}, which is a special case of Theorem 9.2 of \cite{saxton}.\footnote{Where we plugged in the explicit values given in their Corollary 3.6 and Theorem 9.3 to obtain our explicit constant.}

\begin{thm}\label{thm:h-free-graphs-2}
Let $H=K_{r}^{(\ell)}$ with $r>\ell$ and let $\delta>0.$ For any positive integer $n$ there exists a collection of $\ell$-graphs $C_1,\ldots, C_m$ on the vertex set $[n]$, such that 
\begin{enumerate}[label=(\alph*), ref=(\alph*)]
    \item \label{itm:1'} every $H$-free $\ell$-graph on the vertex set $[n]$ is contained within some $C_i,$ 
    \item[(b')] \label{itm:2'} Each $C_i$ contains at most $\delta \binom{n}{r}$ copies of $H$ and
    \item[(c)] \label{itm:3'} $\log m \le \frac{1}{\delta}\log \frac{1}{\delta}2^{10\binom{r}{\ell}^2}n^{\ell-1/m(H)}\log n.$
\end{enumerate}
\end{thm}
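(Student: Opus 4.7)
The plan is to derive the statement as a direct specialisation of the Saxton--Thomason container theorem \cite[Theorem~9.2]{saxton} to the case $H=K_r^{(\ell)}$, with the constants tracked explicitly via their Corollary~3.6 and Theorem~9.3, as the footnote already indicates. Thus conclusions (a'), (b') and the shape of (c') come essentially for free from the framework; the real task is to set up a suitable auxiliary hypergraph, verify its codegree hypothesis, and then propagate the explicit constants through the iterated ``scythe'' algorithm.

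First, I would work in the $\binom{r}{\ell}$-uniform auxiliary hypergraph $\mathcal{H}$ whose vertex set is $E(K_n^{(\ell)})$ and whose hyperedges are the edge-sets of the copies of $K_r^{(\ell)}$ in $K_n^{(\ell)}$. Independent sets of $\mathcal{H}$ are precisely the $K_r^{(\ell)}$-free $\ell$-graphs on $[n]$, so any container family for $\mathcal{H}$ immediately yields (a'). Next I would check the codegree hypothesis: for a set $\sigma$ of $j$ edges spanning $v(\sigma)$ vertices, the codegree is $d_j(\sigma) = \binom{n-v(\sigma)}{r-v(\sigma)}$, which combined with $|V(\mathcal{H})|=\binom{n}{\ell}$ and $|E(\mathcal{H})|=\binom{n}{r}$ shows that the codegree function $\tau$ of $\mathcal{H}$ satisfies $\tau = O\!\left(n^{-1/m(H)}\right)$. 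The exponent $m(H) = \bigl(\binom{r}{\ell}-1\bigr)/(r-\ell)$ here is precisely the worst-case subgraph density from the statement, and it controls the fingerprint size.

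Then I would run the Saxton--Thomason algorithm with parameter $\delta$. For each $K_r^{(\ell)}$-free $I$ the algorithm outputs a fingerprint $S\subseteq I$ of size at most $s$ together with a container $C = C(S)\supseteq I$ satisfying $e_{\mathcal{H}}(C) \le \delta|E(\mathcal{H})| = \delta\binom{n}{r}$, which is exactly (b'). The fingerprint size obeys $s = O\!\left(\tfrac{1}{\delta}\log\tfrac{1}{\delta}\cdot\tau\,|V(\mathcal{H})|\right)$, where the constant hidden in the $O(\cdot)$ is the one that produces the factor $2^{10\binom{r}{\ell}^2}$ once it is extracted from Corollary~3.6 and Theorem~9.3 of \cite{saxton}. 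Since the number of containers is at most the number of admissible fingerprints, $m \le \binom{|V(\mathcal{H})|}{\le s}$, and $\log\binom{|V(\mathcal{H})|}{\le s} \le s\log|V(\mathcal{H})| \le s\,\ell\log n$, substituting the bounds for $s$ and $\tau$ yields (c').

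The main obstacle is the quantitative bookkeeping. The combinatorial content, namely the codegree computation for the clique hypergraph, is elementary, and the overall shape of the bound is forced by the framework. However, to land on the specific constant $2^{10\binom{r}{\ell}^2}$ one has to chase the bounds through the single-step estimates of Corollary~3.6 and through the iteration of Theorem~9.3, verifying at each stage that no hidden dependence on $n$ leaks into the constant and that all $r$- and $\ell$-dependencies are absorbed into the advertised factor. Any looseness in this bookkeeping only degrades the constant and leaves the overall form of (c') intact.
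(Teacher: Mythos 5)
Your proposal matches the paper's treatment: the paper gives no proof of this statement at all, but simply cites it as a special case of Theorem~9.2 of Saxton--Thomason, with the explicit constant obtained by plugging in the values from their Corollary~3.6 and Theorem~9.3 (as noted in the footnote). Your sketch --- the auxiliary $\binom{r}{\ell}$-uniform hypergraph on $E(K_n^{(\ell)})$ whose edges are the clique copies, the codegree verification giving $\tau = O(n^{-1/m(H)})$, and the constant-chasing through the container algorithm --- is a faithful account of exactly that derivation.
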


%Corollary 3.6 of \cite{saxton} (applied to the hypergaph $G$ of copies of $H$,  which is
%$\binom{r}{\ell}$-uniform, has $\binom{n}{r}$ edges and $\binom{n}{\ell}$ vertices) says that Theorem 13 holds with upper bound of part $(c)$ replaced with:
%$800\binom{r}{\ell}!^3 \binom{r}{\ell} \log \frac1\delta \binom{n}{\ell} \tau \log\frac 1\tau,$ provided $\delta(G,\tau)\le \delta/(12\binom{r}{\ell}!),$ where $\delta(G,\tau)$ is the co-degree polynomial. We choose $\tau=12 n^{-1/m(H)} 2^{4\binom{r}{\ell}^2}/\delta $
%for which their Theorem 9.3 implies $\delta(G,\tau)\le 2^{\binom{r}{\ell}^2}r!2^{-4\binom{r}{\ell}^2}\delta/12 \cdot n^r/\binom{n}{r}\le \delta/(12\binom{r}{\ell}!).$

Apart from an explicit constant in part (c) the main difference compared to \Cref{thm:h-free-graphs} is that in the condition (b') rather than bounding the number of edges in each container we bound the number of copies of the forbidden graph $H$ it contains. It is not hard to obtain condition \ref{itm:2} from (b') by making use of the Erd\H{o}s-Simonovits supersaturation lemma, but requiring an explicit constant makes it slightly messy. We start with the standard bound of De Caen on $\ex(K_{r}^{(\ell)},n).$

\begin{thm}[De Caen \cite{de-caen}]\label{thm:de-caen}
$$\ex(K_{r}^{(\ell)},n)\le\left(1-\frac{n-r+1}{n-\ell+1}/\binom{r-1}{\ell-1} \right)\binom{n}{\ell}.$$ 
\end{thm}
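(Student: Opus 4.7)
The plan is to prove the bound by double-counting anchored on the defining property of a $K_r^{(\ell)}$-free hypergraph $H$: every $r$-subset of $[n]$ contains at least one non-edge. Writing $a := \binom{n}{\ell} - e(H)$ for the number of non-edges, the goal is to lower-bound $a$ by $(n-r+1)\binom{n}{\ell}/((n-\ell+1)\binom{r-1}{\ell-1})$. As a warm-up I would verify the weaker inequality $a \geq \binom{n}{\ell}/\binom{r}{\ell}$, which follows immediately by counting incidences $(\bar{e}, T)$ with $\bar{e} \subset T$ an $r$-subset: at least $\binom{n}{r}$ on one side (one non-edge per $r$-subset), and at most $a\binom{n-\ell}{r-\ell}$ on the other (each non-edge sits in $\binom{n-\ell}{r-\ell}$ many $r$-subsets).

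This naive bound misses the target by a factor of $r/\ell$, and recovering it is the meat of the proof. I would switch to a pointed count anchored on pairs $(U,v)$ with $U$ an $(r-1)$-subset of $[n]$ and $v \in [n]\setminus U$: the $K_r^{(\ell)}$-freeness still forces a non-edge in $U \cup \{v\}$, which is either internal to $U$ or crosses through $v$ via an $(\ell-1)$-subset of $U$. The critical structural observation is that if $U$ is \emph{saturated}---that is, induces a $K_{r-1}^{(\ell)}$-clique in $H$---then each of the $n-r+1$ choices of $v \notin U$ forces a non-edge through $v$, producing many crossings; non-saturated $U$ carry at least one internal non-edge by definition.

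The main obstacle is that naively combining these two regimes only reproduces the weaker bound, so the extra factor $r/\ell$ must be extracted through an asymmetric weighting. I would do this by reformulating at the level of $(\ell-1)$-subsets $S$, writing $\sum_S \bar{d}(S) = \ell a$ with $\bar{d}(S) := n-\ell+1 - d(S)$ the codegree, and applying the saturated-$U$ analysis at the $(\ell-1)$-subset level rather than at the clique level. The denominator $\binom{r-1}{\ell-1}$ arises naturally here as the number of $(\ell-1)$-subsets of an $(r-1)$-set---replacing the weaker $\binom{r}{\ell}$ from the naive count---and the identities $\binom{n}{\ell-1}(n-\ell+1) = \ell\binom{n}{\ell}$ together with $(n-r+1)\binom{n-\ell}{r-1-\ell} = (r-\ell)\binom{n-\ell}{r-\ell}$ provide the bookkeeping to rearrange the pointed inequality into the stated De Caen bound.
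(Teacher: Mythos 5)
First, note that the paper does not prove this statement at all: it is quoted verbatim from de Caen's 1983 paper and used as a black box, so there is no internal proof to compare against. Your proposal must therefore stand on its own, and as written it does not close. The warm-up count is fine (the identity $\binom{n}{r}/\binom{n-\ell}{r-\ell}=\binom{n}{\ell}/\binom{r}{\ell}$ gives $a\ge\binom{n}{\ell}/\binom{r}{\ell}$), and the dichotomy for an $(r-1)$-set $U$ is correctly observed: if $U$ spans a clique, each of the $n-r+1$ vertices $v\notin U$ forces a non-edge of the form $S\cup\{v\}$ with $S\in\binom{U}{\ell-1}$, hence $\sum_{S\in\binom{U}{\ell-1}}\bar d(S)\ge n-r+1$ for every \emph{saturated} $U$; otherwise $U$ contains an internal non-edge. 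But the step you yourself flag as ``the meat of the proof'' --- the asymmetric weighting that upgrades this to $\sum_S\bar d(S)\ge(n-r+1)\binom{n}{\ell-1}/\binom{r-1}{\ell-1}$, which is what the stated bound amounts to --- is never actually specified. Summing the saturated-case inequality over all saturated $U$ weights each $\bar d(S)$ by the number of $(r-1)$-cliques containing $S$, a quantity that is completely uncontrolled and can be wildly uneven across the $(\ell-1)$-sets $S$; the binomial identities you list are bookkeeping, not a mechanism for removing these weights.

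The gap is not cosmetic: in the extreme case where $H$ contains no $(r-1)$-clique at all, the saturated regime is vacuous and the internal-non-edge count over $(r-1)$-sets yields only $a\ge\binom{n}{\ell}/\binom{r-1}{\ell}=\frac{\ell}{r-\ell}\cdot\binom{n}{\ell}/\binom{r-1}{\ell-1}$, which for $r>2\ell$ falls short of the target $\approx\binom{n}{\ell}/\binom{r-1}{\ell-1}$ by a factor tending to $(r-\ell)/\ell$. So the two regimes, combined in any way that does not import further information, cannot give de Caen's bound; one is forced either into an induction on $r$ or, as in de Caen's actual argument, into a Moon--Moser-type recursion relating the numbers $t_j$ of $j$-cliques for $\ell\le j\le r-1$ (driven by a Cauchy--Schwarz/convexity step), with the stated bound obtained by unrolling the recursion from $t_r=0$ down to $t_\ell=e(H)$. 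That global, multi-level argument is the missing idea; a single double count anchored at the level $r-1$ versus $\ell-1$ does not suffice.
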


We now state the Erd\H{o}s-Simonovits supersaturation lemma, keeping track of the constants.

\begin{thm}[Erd\H{o}s-Simonovits \cite{kovari-sos}]\label{thm:supersaturation}
Let $H$ be an $\ell$-graph with $r$ vertices, $x,\eps>0$ and $m \in \mathbb{N}.$ Given $\ex(H,m) < x\binom{m}{\ell}$ we have that if an $\ell$-uniform hypergraph on $n$ vertices contains at least $\left(x+\eps \right)\binom{n}{\ell}$ edges then it contains more than $\eps\binom{m}{r}^{-1} \binom{n}{r}$ copies of $H.$ 
\end{thm}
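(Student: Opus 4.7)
The plan is to prove this via the standard double-counting / averaging argument of Erd\H{o}s--Simonovits, carefully tracking constants so that the strict inequality and the specific factor $\eps\binom{m}{r}^{-1}$ come out as stated.

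First, I would let $G$ denote the given $\ell$-uniform hypergraph on $n$ vertices with at least $(x+\eps)\binom{n}{\ell}$ edges, and for each $m$-subset $S\subseteq V(G)$ let $e(S)$ denote the number of edges of $G$ contained in $S$. The key identity is obtained by counting incidences between edges of $G$ and $m$-subsets of $V(G)$: since each edge lies in exactly $\binom{n-\ell}{m-\ell}$ such $m$-subsets,
\[
\sum_{|S|=m} e(S) \;=\; e(G)\binom{n-\ell}{m-\ell} \;\ge\; (x+\eps)\binom{n}{\ell}\binom{n-\ell}{m-\ell} \;=\; (x+\eps)\binom{n}{m}\binom{m}{\ell},
\]
using the identity $\binom{n}{\ell}\binom{n-\ell}{m-\ell}=\binom{n}{m}\binom{m}{\ell}$.

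Second, I would use this averaging to count how many $m$-subsets are ``dense''. Let $\mathcal{D}$ be the collection of $m$-subsets $S$ with $e(S) \ge x\binom{m}{\ell}$; using the trivial bound $e(S)\le \binom{m}{\ell}$ for $S\in\mathcal{D}$ and $e(S) < x\binom{m}{\ell}$ otherwise, the sum above is bounded by $|\mathcal{D}|\binom{m}{\ell} + (\binom{n}{m}-|\mathcal{D}|)\,x\binom{m}{\ell}$. Rearranging yields $|\mathcal{D}| \ge \eps\binom{n}{m}/(1-x) \ge \eps\binom{n}{m}$. By the hypothesis $\ex(H,m) < x\binom{m}{\ell}$, every $S\in\mathcal{D}$ contains at least one copy of $H$ in $G[S]$.

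Third, I would convert the count of pairs $(S,H')$ with $S\in\mathcal{D}$ and $H'\subseteq G[S]$ a copy of $H$ into a count of distinct copies of $H$ in $G$. Since each copy of $H$ has $r$ vertices and therefore lies in exactly $\binom{n-r}{m-r}$ $m$-subsets, the number of distinct copies of $H$ in $G$ is at least
\[
\frac{|\mathcal{D}|}{\binom{n-r}{m-r}} \;\ge\; \eps\cdot\frac{\binom{n}{m}}{\binom{n-r}{m-r}} \;=\; \eps\binom{m}{r}^{-1}\binom{n}{r},
\]
where the final equality uses $\binom{n}{m}\binom{m}{r} = \binom{n}{r}\binom{n-r}{m-r}$. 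To upgrade this to a strict inequality as stated, I would note that the hypothesis $\ex(H,m) < x\binom{m}{\ell}$ is strict, so any $S\in\mathcal{D}$ actually contains strictly more than $\ex(H,m)$ edges; combined with the fact that the bound $|\mathcal{D}| \ge \eps\binom{n}{m}/(1-x)$ has slack whenever $x>0$, the strict inequality follows.

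There is no real obstacle here: the argument is a clean double count plus the definition of the Tur\'an number. The only subtlety is keeping the constants clean enough to produce exactly the factor $\eps\binom{m}{r}^{-1}$ and the strict inequality, which requires being a bit careful with the averaging in step two (in particular, using $1-x\le 1$ when dropping the denominator).
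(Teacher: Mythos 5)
Your proof is correct: the double count over $m$-subsets, the averaging step showing at least $\eps\binom{n}{m}/(1-x)$ of them exceed the Tur\'an threshold, and the conversion via $\binom{n}{m}\binom{m}{r}=\binom{n}{r}\binom{n-r}{m-r}$ are exactly the standard Erd\H{o}s--Simonovits argument, and your observation that $x>0$ forces $1-x<1$ (note $x+\eps\le 1$ is automatic from the edge count) legitimately yields the strict inequality. The paper itself states this lemma as a quoted result from Erd\H{o}s--Simonovits and supplies no proof, so there is nothing to compare against; your write-up is a complete, self-contained justification of the statement as used in the paper.
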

%these numbers are given by the standard averaging proof found e.g. here http://staff.ustc.edu.cn/~jiema/ExtrGT2016/March%208.pdf
Combining the last three theorems gives us the following explicit version of \Cref{thm:h-free-graphs} for the complete $\ell$-graph. 

\begin{thm}\label{thm:h-free-graphs-3}
Let $H=K_{r}^{(\ell)}$ with $r>\ell$ and let $\delta>0.$ For any positive integer $n$ there exists a collection of $\ell$-graphs $C_1,\ldots, C_m$ on the vertex set $[n]$, such that 

\begin{enumerate}[label=(\alph*), ref=(\alph*)]
    \item \label{itm:1''} every $H$-free $\ell$-graph on the vertex set $[n]$ is contained within some $C_i,$ 
    \item \label{itm:2''} $|E(C_i)|\le \left(1-\frac23\binom{r-1}{\ell-1}^{-1}\right)\binom{n}{\ell}$ and
    \item \label{itm:3''} $\log m \le 2^{13\binom{r}{\ell}^2}n^{\ell-1/m(H)}\log n.$
\end{enumerate}
\end{thm}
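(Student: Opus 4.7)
The plan is to apply Theorem~\ref{thm:h-free-graphs-2} with a carefully chosen $\delta$ and then upgrade the bound (b') on the number of $H$-copies per container into the edge-density bound \ref{itm:2''} via the Erd\H{o}s--Simonovits supersaturation lemma (Theorem~\ref{thm:supersaturation}), with De Caen's bound (Theorem~\ref{thm:de-caen}) used to control the Tur\'an density of $H=K_r^{(\ell)}$. Throughout write $b := \binom{r-1}{\ell-1}$.

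First I would verify via De Caen at $m=6r$ that $\ex(K_r^{(\ell)}, m) < (1 - \tfrac{5}{6b})\binom{m}{\ell}$; this reduces to $\tfrac{5r+1}{6r-\ell+1} > \tfrac{5}{6}$, which is immediate for $\ell\ge 1$. Then supersaturation with $x = 1 - \tfrac{5}{6b}$ and $\varepsilon = \tfrac{1}{6b}$ implies that any $\ell$-graph on $n$ vertices carrying at least $(1 - \tfrac{2}{3b})\binom{n}{\ell}$ edges contains more than $\delta\binom{n}{r}$ copies of $H$, where
\[
\delta := \frac{1}{6b\binom{6r}{r}}.
\]
Applying Theorem~\ref{thm:h-free-graphs-2} with this $\delta$, property \ref{itm:1''} is inherited directly from (a), while for \ref{itm:2''} the contrapositive of the previous statement, applied to each container $C_i$ (which by (b') has at most $\delta\binom{n}{r}$ copies of $H$), yields the desired edge bound.

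It remains to verify \ref{itm:3''}, which will then follow from Theorem~\ref{thm:h-free-graphs-2}(c) provided $\tfrac{1}{\delta}\log\tfrac{1}{\delta} \le 2^{3\binom{r}{\ell}^2}$: the final exponent of $2$ is then $10+3=13$, as required. Using the crude estimates $b \le 2^{r-1}$ and $\binom{6r}{r} \le (6e)^r$ gives $\tfrac{1}{\delta} \le 3(12e)^r$, and hence $\tfrac{1}{\delta}\log\tfrac{1}{\delta} = 2^{O(r)}$. Since $r > \ell \ge 1$ forces $\binom{r}{\ell} \ge r \ge 2$, we have $3\binom{r}{\ell}^2 \ge 3r^2$, which comfortably dominates the $O(r)$ term once $r\ge 3$; the borderline pair $(r,\ell)=(2,1)$, where $1/\delta = 396$ and $\log(1/\delta)<6$, is handled by direct computation since $396\cdot 6 < 2^{12} = 2^{3\binom{2}{1}^2}$.

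The main obstacle is this last piece of bookkeeping: the constants $5/6$ and $2/3$ appearing in the De Caen and supersaturation inputs have to be tuned so that the final edge-density threshold is exactly $(1 - \tfrac{2}{3}b^{-1})\binom{n}{\ell}$ as advertised in \ref{itm:2''}, while keeping $\delta$ large enough that the resulting $\tfrac{1}{\delta}\log\tfrac{1}{\delta}$ factor absorbs cleanly into the exponent $2^{13\binom{r}{\ell}^2}$ uniformly in $r$ and $\ell$, including the tightest small-$r$ cases. Once these constants are fixed, the argument is essentially a mechanical chaining of the three input theorems.
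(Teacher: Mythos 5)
Your proposal is correct and follows the paper's proof essentially verbatim: the same choice of $x=1-\frac{5}{6}\binom{r-1}{\ell-1}^{-1}$, $\varepsilon=\frac{1}{6}\binom{r-1}{\ell-1}^{-1}$ and $m=6r$ in De Caen plus supersaturation, the same $\delta^{-1}=6\binom{r-1}{\ell-1}\binom{6r}{r}$, and the same application of Theorem~\ref{thm:h-free-graphs-2}. The only difference is that you carry out the final estimate $\frac{1}{\delta}\log\frac{1}{\delta}\le 2^{3\binom{r}{\ell}^2}$ more explicitly than the paper, which simply asserts $1/\delta\le 2^{2\binom{r}{\ell}^2}$.
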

\begin{proof}
    Let $x=1-\frac56\binom{r-1}{\ell-1}^{-1},\eps=\frac16\binom{r-1}{\ell-1}^{-1}$ and $m=6r.$ By \Cref{thm:de-caen} we know that $\ex(H,m)<x\binom{m}{\ell}$ so \Cref{thm:supersaturation} applies showing that any $\ell$ graph on $n$ vertices with more than $(x+\eps)\binom{n}{\ell}=\left(1-\frac23\binom{r-1}{\ell-1}^{-1}\right)\binom{n}{\ell}$ edges contains at least $\delta \binom{n}{r}$ copies of $H,$ where $1/\delta:=\eps^{-1}\binom{m}{r}\le 6\binom{r-1}{\ell-1}\binom{6r}{r}\le 2^{2\binom{r}{\ell}^2}.$
    Using this value of $\delta$ in \Cref{thm:h-free-graphs-2} we obtain the result.
\end{proof}

We are now ready to obtain the bound on $\R(K_r^{(\ell)},k)$ promised
in the introduction.
\thmubb*
\begin{proof}
%The first inequality is trivial since $H \subseteq K_r^{(\ell)}$ so
%we set $H=K_r^{(\ell)}.$ 
We may assume $r>\ell\ge 2$ and $k\ge 2,$ as otherwise the inequality is clearly true. 

Repeating the argument that lead to \eqref{eq:ub4} with $1-\binom{r-1}{\ell-1}^{-1}$ in place of $\pi(H),$ $\eps=\frac13\binom{r-1}{\ell-1}^{-1}$ and using \Cref{thm:h-free-graphs-3} instead of \Cref{thm:h-free-graphs} we obtain that $\R(K_r^{(\ell)},k)\le n$ given:
\begin{align*}
2^{13\binom{r}{\ell}^2}\cdot k\cdot \left(1+6\binom{r-1}{\ell-1}\right)\cdot \ell^\ell\cdot \left(3\binom{r-1}{\ell-1}\right)^{k}<n^{1/m(H)}/\log n.
\end{align*}
Which, using $m(H)=\frac{\binom{r}{\ell}-1}{r-\ell}\le r^{\ell-1}/(l-1)$ holds for  $n=2^{4r^{3\ell-1}+4kr^{\ell-1}\log_2 r},$ to see this notice that $\log n \le 10r^{3\ell-1}k;\:\:$ $k^2\left(3\binom{r-1}{\ell-1}\right)^{k}\le k^2r^{\ell k}\le 2^{4k(\ell-1)\log_2r}; \:\:$ $10r^{3\ell-1}\left(1+6\binom{r-1}{\ell-1}\right) \ell^\ell\le r^{5r}\le 2^{3r^2}\le 2^{3\binom{r}{\ell}^2};$ and $2^{16\binom{r}{\ell}^2}\le 2^{4r^{2\ell}}$.
\end{proof}
% it is easy to get
% $n=2^{(c_1\binom{r}{\ell}^2+c_2k\log_2r)\binom{r-1}{\ell-1}},$ for
% absolute constants $c_1,c_2.$

After this paper was submitted Balogh and Samotij obtained a more efficient container lemma in \cite{balogh2}. This can be used to obtain a slight improvement in the bound of the above theorem.

\subsubsection{Lower bounds.}
Let us now turn towards lower bounds.
The main tool is the following lemma, giving us a lower bound for
$\R(H,k)$ in terms of the ordinary Ramsey number, but with fewer
colours. 
%This will allow us to use it to give lower bounds on the list Ramsey
%number, using our knowledge of the ordinary Ramsey numbers.

\begin{thm}\label{thm:lower-bound}
If $R(H,\lfloor k/(\ell \log n)\rfloor)> n$ then:
$$\R(H,k) > n.$$
\end{thm}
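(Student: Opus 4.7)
I would generalize the random type-assignment argument that already appears in the proof of \Cref{lem:lower-bound-mathcings}. Set $t=\lfloor k/(\ell\log n)\rfloor$ and assume $t\ge 1$ (otherwise the hypothesis $R(H,t)>n$ is vacuous or trivial). By assumption there exists an ordinary $t$-colouring $c:E(K_n^{(\ell)})\to [t]$ with no monochromatic copy of $H$. My goal is to show that for \emph{every} assignment $L$ of lists of size $k$ to the edges of $K_n^{(\ell)}$, one can extract an $L$-colouring without a monochromatic copy of $H$, by ``lifting'' $c$ through the lists.

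Given $L$, I would assign to each colour appearing in $\bigcup_{e}L_e$ an independent, uniformly random \emph{type} in $[t]$. For each edge $e$, let $B_e$ be the event that no colour in $L_e$ received type $c(e)$. Since $|L_e|=k$ and each colour independently misses type $c(e)$ with probability $1-1/t$, we have
\[
\P(B_e)=\left(1-\frac{1}{t}\right)^{k}\le e^{-k/t}\le e^{-\ell\log n}=n^{-\ell}.
\]
A union bound over the at most $\binom{n}{\ell}<n^\ell/\ell!\le n^\ell$ edges of $K_n^{(\ell)}$ yields $\P\big(\bigcup_e B_e\big)<1$, so with positive probability none of the events $B_e$ occurs. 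Fix any such type assignment.

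Now define an $L$-colouring $c'$ by choosing, for each edge $e$, an arbitrary colour $c'(e)\in L_e$ whose type equals $c(e)$; this is possible precisely because $B_e$ failed. Any monochromatic copy of $H$ under $c'$ would, by construction, also be monochromatic in the induced type-colouring, which coincides with $c$ under the identification of each colour with its type. This contradicts the choice of $c$, so $c'$ contains no monochromatic copy of $H$, as desired.

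The argument is essentially routine once the parameters are set, so I do not anticipate a real obstacle; the only thing to watch is that $t\ge 1$ (which is the meaningful regime) and that the constant $\ell$ in the denominator is chosen so that $(1-1/t)^k$ beats $\binom{n}{\ell}$ comfortably --- this is exactly why the hypothesis uses $\lfloor k/(\ell\log n)\rfloor$ rather than $\lfloor k/\log n\rfloor$.
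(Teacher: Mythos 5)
Your proposal is correct and follows essentially the same route as the paper's proof: the same random assignment of types in $[\lfloor k/(\ell\log n)\rfloor]$ to colours, the same bound $\P(B_e)\le(1-1/t)^k\le n^{-\ell}$, and the same union bound over the $\binom{n}{\ell}$ edges before lifting the monochromatic-$H$-free colouring through the lists.
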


\begin{proof}
The proof will proceed along similar lines as that of \Cref{lem:lower-bound-mathcings}. Let $m=\lfloor k/(\ell \log n)\rfloor$. Consider a colouring $c:E\left(K_n^{(\ell)}\right) \to [m],$ without a monochromatic $H,$ which exists because $n < R(H,m)$.

Let each edge $e$ of $K_n^{(\ell)}$ be assigned a list $L_e$ of size $k$, our goal is to show that we can pick colours from the lists avoiding a monochromatic copy of $H$.

We assign to each colour a type from $[m]$, independently and uniformly at random.
Let $B_e$ be the event that no colour in $L_e$ got assigned type $c(e)$. Then 
$$\P(B_e)\le (1-1/m)^k \le (1-\ell\log n/k)^k \le 1/n^\ell. $$

So by the union bound we obtain:
$$\bigcup_{e \in E\left(K_n^{(\ell)}\right)} \P(B_e)\le \binom{n}{\ell} \cdot 1/n^\ell <1.$$
Thus there is an assignment of types for which every $e\in E\left(K_n^{(\ell)}\right)$ has at least one colour of type $c(e)$ in its list and we colour $e$ in one such colour. In this colouring there can be no monochromatic copy of $H$ since otherwise there would be a monochromatic copy of $H$ under $c,$ contradicting our choice of $c.$
\end{proof}

We can now deduce all our lower bounds from the introduction.

\begin{proof}[ of \Cref{thm:hypergraph-lower}] 
Let us first show that $R(H,k) > r^k.$ In order to do this we exhibit 
a colouring of $G=K_{r^k}^{(\ell)}$ without a monochromatic copy of $H.$ We split $G$ into $r$ equal parts and colour all edges not completely within one of the parts using colour $1,$ then we repeat within each of the parts. Notice that since $\chi(H)>r$ there can be no monochromatic copy of $H$ in this colouring, implying the claim.

Choosing $n=r^{\floor{\sqrt{k /(\ell \log r)}}}$ we have that 
$$R(H,\floor{k/(\ell \log n)}) > r^{\floor{k/(\ell\log n)}}\ge r^{\floor{\sqrt{k/(\ell \log r)}}}=n.$$
Hence \Cref{thm:lower-bound} applies, giving us the desired inequality.
\end{proof}

\begin{proof}[ of Theorem~\ref{thm:non-l-partite}]
Axenovich, Gy\'arf\'as, Liu and Mubayi \cite{axenovich} showed 
that if an $\ell$-graph $H$ is not $\ell$-partite then 
\begin{align} %\label{eq:lb2}
R(H,k) \ge e^{k/((\ell+1)e^\ell)}.
\end{align}
Then for $n=\floor{e^{c_\ell\sqrt{k}}}$ we have that 
$$R(G,\floor{k/(\ell \log n)}) \ge e^{\floor{k/(\ell\log n)}/((\ell+1)e^{\ell})}\ge e^{c_\ell\sqrt{k}}\ge n,$$
so \Cref{thm:lower-bound} applies and gives us the desired inequality.
\end{proof}

\begin{proof}[ of Theorem~\ref{thm:l-partite}]
The upper bound is the trivial inequality \eqref{ineq:trivial}. For
the lower bound we set $n=R(H,\floor{ck/\log k})-1,$ which 
implies $\left\lfloor\frac{ck}{\log k} \right\rfloor\ex(H, n)
\geq \binom{n}{\ell}$, since each colour class is $H$-free. 
Using Erd\H os' upper bound~\cite[Theorem 1]{erdos-kovari-sos}  on the Tur\'an
number of $\ell$-partite $\ell$-graphs one obtains
\begin{align}\label{eq:lb1}
R(H,k) \le (k \ell^\ell)^{r^{\ell-1}},
\end{align}
%a matching lower bound for $H=K^{\ell}(r)$ comes from the standard probabilistic argument (giving something like 
%$$\R(H,k) \ge k^{r^{\ell-1}/\ell},$$ 
%but this is not so nice for us since this is (probably) the best known lower bound anyway... 
for any $\ell$-partite $\ell$-graph $H$ with each part of size at most $r$.
%This was observed by Axenovich, Gy\'arf\'as, Liu and Mubayi \cite{axenovich}. 
Substituting 
$1/c:=2r^{\ell-1}\ell^2\log \ell$ we get that $$\lfloor k/(\ell \log n)\rfloor  \ge \lfloor k/(\ell \log (k \ell^\ell)^{r^{\ell-1}})\rfloor \ge \floor{ck/\log k}.$$ So we obtain that $$R(H,\lfloor k/(\ell \log n)\rfloor) \ge R(H,\floor{ck/\log k})>n.$$ Hence, \Cref{thm:lower-bound} implies the result.

To deduce the second part, note that from $\ex(H,n)=\tilde{\Theta}(n^{\ell - \varepsilon(H)})$ it is not hard to deduce that $R(H,k)=\tilde{\Theta}(k^{1/\eps(H)}),$ for example it follows from Lemma 15 of \cite{axenovich}. Combining this and the first part of the theorem the result follows.
\end{proof}

%\remark{It would actually be interesting to maybe obtain a better than exponential bound on normal Ramsey number of non \ell-partite hypergraphs (so for \ell>=3) as then we could claim separation for any non-$r$-partite hypergraphs. (If this is true at all,...) For r=3 this seems to be Problem 28 of \cite{axenovich}.}
\setcounter{thm}{0}
\section{Concluding remarks and open problems}
In this paper we initiate the systematic study of list Ramsey numbers
of graphs and hypergraphs. We obtain several general bounds and 
reach a good understanding of how the
list Ramsey number relates to the ordinary Ramsey number for some
families of graphs.
There are plenty of very natural further questions that arise.

For stars we have shown that the list Ramsey number is at most one
smaller than the Ramsey number. 
We showed that they are equal in the case of two colours or
when the size of the star is sufficiently large compared to the number
of colours. Actually, we could not show them to differ for any values 
of the parameters, and we tend to conjecture that they are
always equal.
\begin{conj}
For any $r,k\in \mathbb N$ 
$$ \R(K_{1,r},k)=R(K_{1,r},k).$$
\end{conj}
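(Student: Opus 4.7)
The plan is to deduce the conjecture from \Cref{thm:star-lower-bound} by exhibiting, for each pair $(r,k)$ with $n := R(K_{1,r},k)-1$, a decomposition of $E(K_n)$ into subgraphs $G_1,\ldots,G_t$ with $\chi'_\ell(G_i)\le k$ and every vertex contained in at most $r-1$ of them. By the Burr--Roberts formula the only outstanding cases are those where $r$ and $k$ are not both even, in which case $n = (r-1)k+1$; the case where both are even is already settled by \Cref{thm:stars-lower-bound-k-col}(a).

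Before attempting any construction it is worth pointing out that the $r=2$ case of the conjecture is exactly the List Colouring Conjecture for complete graphs: a colouring of $K_n$ avoids a monochromatic $K_{1,2}$ iff it is a proper edge colouring, so asserting $\R(K_{1,2},k) = R(K_{1,2},k)$ for every $k$ is equivalent to $\chi'_\ell(K_n)=\chi'(K_n)$ for every $n$, whose open case is $n$ even. Thus the conjecture is at least as hard as this notorious problem, which is the principal obstacle.

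For $r\ge 3$ the relaxed budget of $r-1$ on the vertex multiplicity gives more flexibility. The building blocks for which $\chi'_\ell\le k$ is currently available are bipartite graphs of maximum degree $k$ (Galvin) and the clique $K_k$ (H\"aggkvist--Janssen). Two natural approaches to try are: first, to push the argument from part (b) of \Cref{thm:stars-lower-bound-k-col} down to all $r$, by replacing the copies of $K_{k+1,k+1}$ minus a matching used to handle the divisibility defects with more economical bipartite absorbers and by invoking Gustavsson's packing theorem on a slightly different host graph so that the threshold $w(k)$ can be driven down to the base case; second, for $k$ odd, to mimic the strategy of \Cref{thm:stars-2-col}, where Alspach--Gavlas style cycle decompositions of $K_n$ into short even cycles (which are bipartite of maximum degree $2$) were used, by instead decomposing $K_n$ into $r-1$ edge-disjoint bipartite subgraphs of maximum degree $k$. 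A cleaner target would be to decompose $K_n$ with $n=(r-1)k+1$ into $(r-1)$ spanning $k$-regular bipartite graphs plus a leftover near-matching, which would complete the job via Galvin.

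The main difficulty, even leaving aside $r=2$, is that the most natural balanced decompositions of $K_n$ (1-factorizations when $n$ is even, round-robin decompositions when $n$ is odd, cycle decompositions) produce regular pieces that are typically not bipartite, and for non-bipartite regular graphs we have no tool to guarantee $\chi'_\ell\le k$ sharply. Bridging this gap is what makes the conjecture genuinely difficult: either a new result on list-edge-colouring of non-bipartite regular graphs of maximum degree $k$, or a sufficiently clever bipartite packing of $K_n$ into $r-1$ pieces per vertex, would be needed to complete the argument for all $r\ge 3$. Since the conjecture also implies the List Colouring Conjecture for cliques via the $r=2$ case, any full proof must in particular settle that problem, so realistically one should first aim at the $r\ge 3$ instances and view them as a testbed for the general phenomenon.
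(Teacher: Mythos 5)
This statement is a \emph{conjecture} in the paper, not a theorem: the authors offer no proof, and they explicitly observe that already the case $r=2$ is equivalent to the List Colouring Conjecture for cliques, which is a well-known open problem. Your proposal correctly recognises this, and your diagnosis of the situation is accurate and essentially matches the paper's own discussion: the cases settled in the paper are exactly those covered by \Cref{thm:stars-lower-bound-k-col} (both $r$ and $k$ even, or $r\ge w(k)$) and \Cref{thm:stars-2-col} ($k=2$), and all of these go through \Cref{thm:star-lower-bound} by exhibiting a decomposition of $E(K_n)$ into pieces of list chromatic index at most $k$ meeting each vertex at most $r-1$ times.

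However, what you have written is not a proof but a survey of possible strategies, and the gap is total: you do not produce the required decomposition for a single case beyond those already handled in the paper. Concretely, for the outstanding cases ($r$ and $k$ not both even with $r<w(k)$, and in particular $r=2$ for odd $k$) one needs, with $n=(r-1)k+1$, a partition of $E(K_n)$ into subgraphs $G_i$ with $\chi'_\ell(G_i)\le k$ and each vertex in at most $r-1$ of them; the obstruction you name --- that natural balanced decompositions of $K_n$ yield regular pieces that are not bipartite, for which no sharp bound $\chi'_\ell\le k$ is available --- is precisely why the paper leaves this as an open conjecture rather than a theorem. Your remark that any full proof must in particular resolve the List Colouring Conjecture for cliques is correct and is the paper's own stated reason for expecting the problem to be hard; but identifying the difficulty is not the same as overcoming it, so the statement remains unproven by your proposal, exactly as it is unproven in the paper.
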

Proving this conjecture for small $r$, in particular for
$r=2$, seems to be difficult, since that  is
equivalent to the well-studied and still open List Colouring
Conjecture for cliques. That said, it would also be really interesting to
show the conjecture for any $r \ge 3,$ because this already seems to
require new ideas.

For matchings we determine the list Ramsey number up to a constant
factor. While our approach is very similar to the one we use in the
general setting, we obtain very good bounds by exploiting the very
simple structure of matchings. It would be interesting, but again
probably hard, to determine the list Ramsey number of matchings
exactly. We actually obtain the list Ramsey number of matchings up to
a smaller order additive term when the size of the matching is
sufficiently larger than the number of colours. When the number of
colours is large enough compared to the size then we could obtain
tight bounds only up to a multiplicative constant factor. It would be
highly desirable to prove bounds which are correct up to a lower order term.
\begin{qn}
Does the limit 
$$\lim\limits_{k \to \infty} \R(rK_2,k)/(k/\log k)$$
exist and if it does what is its value?
\end{qn}
If this limit exists we have shown that it is between $r/4$ and $34r$.
While we did not make a serious attempt to optimise these constant
factors and it is not hard to improve them by being more careful 
with our arguments, finding the precise constant factor
seems to require new ideas.

There are many other families of graphs for which pretty good bounds are known for the Ramsey number, such as paths or cycles, and which might exhibit interesting behaviour in the list Ramsey setting.

In the case of general graphs and hypergraphs 
we have shown that the list Ramsey number is bounded above by a single exponential function in terms of the number of colours, which for higher uniformity hypergraphs is in stark contrast to the ordinary Ramsey number, 
which is known to exhibit an iterated exponential behaviour. 
In the case of $\ell$-partite $\ell$-graphs we showed that the list Ramsey
number is in fact a polynomial function of the number of colours and
that it is close to the ordinary Ramsey number. For non $\ell$-partite
$\ell$-graphs we have shown a lower bound which is exponential in the
square root of the number of colours. It would be interesting to
ascertain whether this lower bound or the exponential upper bound is
closer to the truth, even only for some specific families (of
non-$\ell$-partite $\ell$-graphs) such as cliques. In fact for the
case of $\ell=2,$ that is, for graphs, it is still open whether the
$k$-colour list Ramsey number of cliques is always equal to its
ordinary Ramsey counterpart.
\begin{qn}
Is it true that for any $r,k \in \mathbb{N}$ 
$$ \R(K_r,k)=R(K_r,k)?$$
\end{qn}

We have shown how list Ramsey numbers connect to various interesting
problems and sometimes exhibit very different behaviour when compared
to their ordinary Ramsey counterparts. Such information may give some
indication for the original Ramsey problem as well. For example, since $\R(K_3,k) \le (4+o(1))^k$ if one wishes to construct an example showing $R(K_3,k)$ is super-exponential in $k$ (and in the process win a \$100 prize from Erd\H{o}s) one needs to ensure this example does not also work in the case of list Ramsey numbers.

Ramsey theory is very rich in attractive problems and there are many such problems which may prove to be interesting in the list Ramsey setting as well. Some classical examples that come to mind are Schur's or Van der Waerden's Theorems.

\paragraph{Acknowledgements}
The research on this project was initiated during a joint research
workshop of Tel Aviv University and the Free University of Berlin
on
Graph and Hypergraph Colouring Problems, held in Berlin in August
2018,
and supported by a GIF grant number G-1347-304.6/2016. We would
like to
thank the German-Israeli Foundation (GIF) and
both institutions for their support.

We are extremely grateful to the anonymous referees for their many useful suggestions and comments.

\end{document}